 \newtheorem{thm}{Theorem}[section]
 \newtheorem{lem}[thm]{Lemma}
 \newtheorem{prop}[thm]{Proposition}
 \theoremstyle{definition}
 \theoremstyle{remark}
 \newtheorem{rem}[thm]{Remark}
 \numberwithin{equation}{section}
\newcommand{\Hol}{{\mathcal H}}
\newcommand{\D}{{\mathbb D}}
\newcommand{\C}{{\mathbb C}}
\newcommand{\w}{\omega}
\newcommand{\dom}{\mathcal D}
\newcommand{\ran}{\mathcal R}
\newcommand{\uP}{{\mathbb U}}
\newcommand{\R}{{\mathbb R}}
\newcommand{\al}{\alpha}
\newcommand{\la}{\lambda}
\newcommand{\g}{\gamma}
\newcommand{\G}{\Gamma}
\newcommand{\vj}{\varphi}
\newcommand{\s}{\sigma}
\newcommand{\Aut}{\operatorname{Aut}}
\begin{document}



%

\title[Duality of the non-reflexive Bergman space of the upper half plane]
 {Duality of the nonreflexive Bergman space of the upper half plane and Composition groups}

\author[E. O. Gori]{E. O. Gori}
\address{%
Department of Pure and Applied Mathematics \\
Maseno University\\
P.O. Box 333 - 40105\\
Maseno\\
Kenya}
\author[J. O. Bonyo]{J. O. Bonyo}
\address{%
Department of Pure and Applied Mathematics \\
Maseno University\\
P.O. Box 333 - 40105\\
Maseno\\
Kenya}
\email{jobbonyo@maseno.ac.ke}


\subjclass{Primary 47B38, 47D03, 47A10}

\keywords{Duality, Nonreflexive Bergman space, Bloch space, Composition semigroups, Infinitesimal generator, Spectrum, Resolvent}


\begin{abstract}
We identify the predual of the nonreflexive Bergman space of the upper half plane, $L_a^1(\uP,\mu_{\al})$, with the little Bloch space of the upper half plane consisting of functions vanishing at $i$. We then investigate both the semigroup and spectral properties of the adjoint groups of composition operators which are naturally obtained from the duality pairing and are therefore defined on the identified predual.
\end{abstract}

\maketitle
\section{Introduction}
Let $ \mathbb{C}$ be the complex plane. The set $ \mathbb{D} := \{z \in \mathbb{C} : |z| < 1\}$ is called the open unit disc. Let $dA $ denote the area measure on $\mathbb{D}$, normalized so that the area of $\mathbb{D}$ is 1. In terms of rectangular and polar coordinates, we have:
         $dA(z)= \frac{1}{\pi} dxdy = \frac{r}{\pi} drd \theta $,
where $z = x+ iy= re^{i\theta} \in \mathbb{D}$.  For $\alpha \in \mathbb{R} , \alpha > -1 $, we define a positive Borel measure $dm _{\alpha}$ on $\mathbb{D}$ by $dm _{\alpha} (z) = (1- |z| ^{2})^{\alpha} dA(z)$, and thus $dm _{\alpha}$ is a probability measure. Moreover, if $\alpha = 0$, then   $dm _{o} = dA$. We consider $dm _{\alpha}$ as a weighted measure and a generalization of $dA$.
On the other hand, the set $\mathbb{U}: = \{\omega \in \mathbb{C} : \Im (\omega) > 0 \}$ denotes the upper half of the complex plane $\mathbb{C}$, with $\Im(\omega)$ being the imaginary part of $\omega \in \mathbb{C}$. For $\alpha > -1$, we define a weighted measure on $\mathbb{U}$ by $d\mu _{\alpha} (\omega) = (\Im(\omega)) ^{\alpha} dA(\omega)$,
where $\omega \in \mathbb{U} $. Again it can easily be seen that $\alpha = 0$ coincides with the unweighted measure. The function $\psi (z) = \frac{i (1+z)}{1-z}$ is referred to as the Cayley transform and maps the unit disc $\mathbb{D}$ conformally onto the upper half-plane $\mathbb{U}$ with the inverse $\psi^{-1} (\omega) = \frac{\omega -i}{\omega + i}$.\\
For an open subset $\Omega$ of $ \mathbb{C}$, let $\mathcal{H} (\Omega)$ denote the space of analytic functions on $\Omega$. For $1 \leq p < \infty $, $\alpha > -1$, the weighted Bergman space of the upper half-plane $\mathbb{U}$ is defined by
\[L ^{p} _{a} (\mathbb{U}, \mu _{\alpha}) : = \left\{f \in \mathcal{H} (\mathbb{U}) : \| f \| _{L ^{p} _{a} (\mathbb{U}, \mu _{\alpha})} = \left (\int_{\mathbb{U}} |f(z)| ^{p} d \mu _{\alpha}(z)\right) ^{\frac{1}{p}} < \infty \right \}.\]
In particular, $L ^{p} _{a} (\mathbb{U}, \mu _{\alpha})=L ^{p}  (\mathbb{U}, \mu _{\alpha}) \cap \mathcal{H} (\mathbb{U})$, where $L ^{p}  (\mathbb{U}, \mu _{\alpha})$ or simply $L ^{p}  ( \mu _{\alpha})$ denotes the classical Lebesque spaces with respect to the weighted measure $d\mu _{\alpha}$. It is important to note that the case $\alpha = 0 $ yields the unweighted Bergman space.
$L ^{p} _{a} (\mathbb{U}, \mu _{\alpha})$ is a Banach space with respect to the norm
\begin{eqnarray*}
 \| f \| _{L ^{p} _{a} (\mathbb{U}, \mu _{\alpha})} &=& \left(\int_{\mathbb{U}} |f(z)| ^{p} d \mu _{\alpha}(z)\right) ^{\frac{1}{p}} < \infty.
\end{eqnarray*}
For $p=2$, $L ^{2} _{a} (\mathbb{U}, \mu _{\alpha})$ is a Hilbert space.
The growth condition for the weighted Bergman space functions is given by: For every $f \in L ^{p} _{a} (\mathbb{U}, \mu _{\alpha})$, $\g=\frac{\al+2}{p}$ and $ \omega \in \mathbb{U}$, there exists a constant $K$ such that,
\begin{eqnarray*}
  |f(\omega)|&\leq&\frac{K\|f\|}{(\Im(\omega))^{\gamma}}.
\end{eqnarray*}
For a detailed account of the theory of Bergman spaces, we refer to \cite{Dur2, Pel, K.ZH}.\\
On the other hand, the Bloch space of the unit disk, denoted by $B_{\infty} (\mathbb{D})$, is defined by
\[B_{\infty} (\mathbb{D}):= \{f\in \mathcal{H}(\mathbb{D}):\| f \|_{B_{\infty,1} (\mathbb{D})}=\sup_{z\in \mathbb{D}} (1-|z|^{2})|f^{\prime} (z)|<\infty \},\]
 with the norm on $B_{\infty} (\mathbb{D})$ is given by $ \| f \|_{B_{\infty} (\mathbb{D})}:= |f(0)| + \| f \|_{B_{\infty,1} (\mathbb{D})}$, while $\|. \|_{B_{\infty,1} (\mathbb{D})}$ is a seminorm. \\The Bloch space of the upper half plane denoted by $B_{\infty} (\mathbb{U})$ is defined by
\[B_{\infty} (\mathbb{U}):= \{f\in \mathcal{H}(\mathbb{U}) :\| f \|_{B_{\infty,1} (\mathbb{U})}=\sup_{\omega\in \mathbb{U}} \Im(\omega)|f^{\prime} (\omega)|<\infty \},\]
with the norm given by $\| f \|_{B_{\infty} (\mathbb{U})}=|f(i)|+ \| f \|_{B_{\infty,1} (\mathbb{U})}$.
The little Bloch space of the unit disk denoted by $B_{\infty,\circ} (\mathbb{D})$ is defined as
\[B_{\infty,\circ} (\mathbb{D}):= \{f\in \mathcal{H}(\mathbb{D}) :\lim_{|z|\to 1} (1-|z|^{2})|f^{\prime} (z)|=0 \}\] but with the same norm as $B_{\infty} (\mathbb{D})$, while for the upper half-plane, the little Bloch space is denoted by $B_{\infty,\circ} (\mathbb{U})$ and is defined by \[B_{\infty,\circ} (\mathbb{U}):= \{f\in \mathcal{H}(\mathbb{U}) :\lim_{\Im(\omega)\to 0} \Im(\omega)|f^{\prime} (\omega)|=0 \}\] with the same norm as $B_{\infty} (\mathbb{U})$. For a comprehensive theory of Bloch spaces, see \cite{K.ZH, Zhu2}.\\
The duality properties of Bergman spaces are well known in literature. For instance in \cite[Theorem 4.2.9]{K.ZH}, it is proved that for $ 1 < p < \infty $, $\frac{1}{p} + \frac{1}{q} = 1$ and $\alpha > -1 $, the dual space of the Bergman space $L_{a}^{p}(\mathbb{D} , m_{\alpha})$ is given by
$(L_{a}^{p}(\mathbb{D} , m_{\alpha}))^{*} \approx L_{a}^{q}(\mathbb{D} , m_{\alpha})$
under the duality pairing,
\begin{eqnarray*}
\langle g,f\rangle &=& \int _{\mathbb{D}} g(z) \overline{f(z)} dm_{\alpha}\quad (g\in L_{a}^{p}(\mathbb{D} , m_{\alpha}) ,f \in L_{a}^{q}(\mathbb{D} , m_{\alpha})).
\end{eqnarray*}
For the non-reflexive Bergman space on the unit disk, $L_a^1(\D,m_{\al})$, it is shown in \cite[Theorems 5.1.4 and 5.2.8]{K.ZH} that the dual and predual spaces of $L ^{1} _{a} (\mathbb{D},m_{\alpha})$ are the Bloch space and the little Bloch space respectively. In particular,
 $ (L_{a}^{1}(\mathbb{D} , m_{\alpha}))^{*} \approx B_{\infty} (\mathbb{D})$ and $(B_{\infty,\circ} (\mathbb{D}))^{*}   \approx  L_{a}^{1}(\mathbb{D} , m_{\alpha})$ under the duality pairings given by respectively,
\begin{eqnarray*}
\langle g,f\rangle = \int _{\mathbb{D}} g(z) \overline{f(z)} dm_{\alpha}(z)\quad(g\in L_{a}^{1}(\mathbb{D} , m_{\alpha}), f \in B_{\infty} (\mathbb{D})),
\end{eqnarray*}
 and
\begin{eqnarray*}
  \langle g,f\rangle &=& \int _{\mathbb{D}} g(z) \overline{f(z)} dm_{\alpha}(z)\quad(f\in L_{a}^{1}(\mathbb{D} , m_{\alpha}), g \in B_{\infty,\circ} (\mathbb{D})).
\end{eqnarray*}
For the corresponding spaces of the upper half plane, it has been proved and noted that the dual space of the reflexive Bergman space of the upper half plane $L ^{p} _{a} (\mathbb{U},  \mu_{\alpha})$ is $L ^{q} _{a} (\mathbb{U} , \mu_{\alpha})$ for $1 < p,q < \infty$ with $\tfrac1{p}+\tfrac1{q}=1$ under a similar pairing as above. See for instance, \cite{S.A, BBMM, Bon} or \cite{Pel} for details. When $p=1$, the space $L ^{1} _{a} (\mathbb{U} , \mu_{\alpha})$ is non-reflexive, and it's recently that the dual was determined by Kang \cite{Kang} as we give in Theorem \ref{Theorem 7} stated in the next section. Apparently, the predual of $L_a^1(\uP,\mu_{\al})$ is not explicitly clear from the literature. Generally, there's no unified and comprehensive exposition of properties of the analytic spaces of upper half plane $\uP$ as there is for the corresponding spaces on the unit disk $\D$. Therefore, the first focus of this paper is to determine the predual of $L_a^1(\uP,\mu_{\al}),$ that is, identifying the space whose dual is $L_a^1(\uP,\mu_{\al})$.\\
Let $\Aut(\uP)$ denotes the collection of all automorphisms of $\uP$. For $\vj_t \in \Aut(\uP)$, $t\geq 0$, we define a composition operator on $\Hol(\uP)$ by $C_{\vj_t}f := f\circ \vj_t$. The corresponding group of weighted composition operator on $\Hol(\uP)$ is therefore given by $T_tf: = S_{\vj_t}f=(\vj_t')^{\g}f\circ \vj_t$ for some appropriate weight $\g$. Motivated by the work of Arvanitidis and Siskakis in \cite{S.A}, the current second author and three others in \cite{BBMM} classified all the self - analytic maps of the upper half plane into three distinct groups, namely: the scaling, the translation and the rotation groups. They then studied both the semigroup and spectral properties of the corresponding groups of weighted composition operators. As for the properties of the adjoint groups on the reflexive weighted Bergman spaces $L_a^p(\uP,\mu_{\al})$, $1<p<\infty,$ only the scaling group was considered in \cite{BBMM} and later completed for the other two groups by the second author in \cite{Bon}. In this paper, we investigate the adjoint properties of the groups of weighted composition operators on nonreflexive Bergman space $L_a^1(\uP,\mu_\al)$.\\
Let $X$ and $Y$ be  Banach spaces over $\mathbb{C}$. The space $\mathcal{L}(X,Y) = \{{T: \,X\,\to\, Y}$ such that $T$ is linear and continuous\}, endowed with the operator norm $\|T\| = \sup_{\|x \| \leq 1} \|Tx\|$, is a Banach space \cite{Con}. We write $\mathcal{L}(X,X) = \mathcal{L} (X)$. $T$ is said to be a closed operator if its graph $\{(x,Tx) \mid x\in \dom(T)\}$ in $X \times Y$ is closed.
 Let $T$ be a closed operator on $X$. The resolvent set of $T$, $\rho (T)$ is given by
 $\rho (T)= \{{\lambda \in \mathbb{C} : \lambda I-T }$ is invertible or bijective\} and its spectrum $\sigma (T)= \mathbb{C}\ \setminus\rho (T)$. Therefore
 $\sigma (T) \cup \rho (T) = \mathbb{C}$.
 The spectral radius of $T$ is defined by $r(T) = \sup \{|\lambda| : \lambda \in \sigma (T)\}$ with  the relation $r(T) \leq \| T \|$.
 The point spectrum $\sigma _{p}(T)= \{{ \lambda \in \mathbb{C} : Tx = \lambda x }$ for some $0 \neq x \in \mbox{dom} (T)\}$.
 For $\lambda \in \rho (T)$, the operator $R(\lambda ,T ) : = (\lambda I - T)^{-1} $ is, by the closed graph theorem a bounded operator on $X$ and is called the resolvent of $T$ at the point $\lambda$ or simply the resolvent operator. In fact, $\rho (T)$ is an open subset of $\mathbb{C}$ and $R(\lambda,T) : \rho (T) \to \mathcal{L} (X)$ is an analytic function. For a detailed theory on spectra, we refer to \cite{Con, Dun, Neu, rudin}.

\section{Predual of Non-reflexive Bergman space of the upper half-plane $L^{1}_{a}(\mathbb{U},\mu_{\alpha})$ }
Let $B_{\infty}(\mathbb{U},i)$ denote the subspace of the Bloch space $B_{\infty}(\mathbb{U})$ consisting of functions vanishing at $i$. Therefore $B_{\infty}(\mathbb{U},i)$ is defined as
 \begin{eqnarray*}
   B_{\infty}(\mathbb{U},i)&:=&\{ f\in B_{\infty}(\mathbb{U}):f(i)=0\}.
 \end{eqnarray*}
Then $B_{\infty}(\mathbb{U},i)$ is a closed subspace of $B_{\infty}(\uP)$ and therefore is a Banach space with respect to the norm $\| f\|_{B_{\infty,i}}:=\| f\|_{B_{\infty}(\mathbb{U})}=\| f\|_{B_{\infty,1}(\mathbb{U})}$.
Similarly, let $B_{\infty,\circ}(\mathbb{U},i)$ denotes the subspace of  $B_{\infty,\circ}(\mathbb{U})$ consisting of functions vanishing at $i$. Therefore
\begin{eqnarray*}
 B_{\infty,\circ}(\mathbb{U},i)&:=& \{ f\in B_{\infty,\circ}(\mathbb{U}):f(i)=0\},
\end{eqnarray*}
with the norm $\| f\|_{B_{\infty,i}}:=\| f\|_{B_{\infty}(\mathbb{U})}=\| f\|_{B_{\infty,1}(\mathbb{U})}$. Again, $B_{\infty,\circ}(\uP,i)$ is a Banach space with respect to the norm given above.\\
The following result due to Kang \cite{Kang} gives the dual of $L_a^1(\uP,\mu_\al)$;
\begin{thm}\label{Theorem 7}
For any $\alpha\in\mathbb{R}$, $\alpha > -1$, we have
\begin{eqnarray*}
  (L^{1}_{a}(\mathbb{U},\mu_{\alpha}))^{*} &\approx& B_{\infty}(\mathbb{U},i),
\end{eqnarray*}
 under the integral pairing
\begin{eqnarray*}
  \langle g,f\rangle &=& \int_{\mathbb{U}}g(w)\overline{f(w)}d\mu_{\alpha}(w)\quad(g\in L^{1}_{a}(\mathbb{U},\mu_{\alpha}),f \in B_{\infty}(\mathbb{U},i)).
\end{eqnarray*}
\end{thm}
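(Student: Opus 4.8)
The plan is to deduce the statement from the already-stated disk duality $(L^1_a(\mathbb{D},m_\alpha))^{*}\approx B_\infty(\mathbb{D})$ by transporting everything through the Cayley transform, while isolating the one feature that genuinely distinguishes the half-plane from the disk. First I would set up the two transport maps. Using $\Im\psi(z)=(1-|z|^2)/|1-z|^2$, $|\psi'(z)|=2/|1-z|^2$ and the resulting Jacobian identity $d\mu_\alpha(\omega)=c\,|1-z|^{-(2\alpha+4)}\,dm_\alpha(z)$ (with $\omega=\psi(z)$), the weighted composition $Vg:=c\,(1-z)^{-(2\alpha+4)}\,(g\circ\psi)$ is a surjective isometry $L^1_a(\mathbb{U},\mu_\alpha)\to L^1_a(\mathbb{D},m_\alpha)$. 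In parallel, since $(1-|z|^2)|\psi'(z)|=2\,\Im\psi(z)$, plain composition $f\mapsto f\circ\psi$ is, up to the factor $2$, an isometry of Bloch seminorms, and because $\psi(0)=i$ it carries $B_\infty(\mathbb{U},i)$ onto $\{F\in B_\infty(\mathbb{D}):F(0)=0\}$.

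The point that must be explained is precisely this last line: the disk dual is the \emph{full} Bloch space, yet its half-plane counterpart is only the codimension-one subspace of functions vanishing at $i$. The reconciling fact, which I would isolate as the key lemma, is that the infinite measure $\mu_\alpha$ annihilates constants against $L^1_a$, i.e. $\int_\mathbb{U} g\,d\mu_\alpha=0$ for every $g\in L^1_a(\mathbb{U},\mu_\alpha)$. By Fubini this reduces to the inner integral $I(y)=\int_\mathbb{R} g(x+iy)\,dx$, which is finite for a.e. $y$; analyticity and line-integrability of $g$ make $I(y)$ independent of $y$ by Cauchy's theorem, and then $\int_0^\infty y^\alpha|I(y)|\,dy\le \pi\|g\|_{L^1_a}<\infty$ together with the divergence of $\int_0^\infty y^\alpha\,dy$ forces $I\equiv 0$. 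Consequently the pairing $\langle g,f\rangle=\int_\mathbb{U} g\bar f\,d\mu_\alpha$ depends on $f$ only modulo constants; it is therefore well defined on $B_\infty(\mathbb{U},i)$, the normalization $f(i)=0$ singles out a unique representative, and the map $f\mapsto\langle\cdot,f\rangle$ is injective, since a vanishing functional forces $f$ constant, hence $f=0$.

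For boundedness and surjectivity I would argue intrinsically on $\mathbb{U}$, imitating Zhu's disk proof. Boundedness follows from the Littlewood--Paley form of the pairing, $\int_\mathbb{U} g\bar f\,d\mu_\alpha=c_\alpha\int_\mathbb{U} g'(\omega)\overline{f'(\omega)}\,(\Im\omega)^{\alpha+2}\,dA(\omega)$, obtained by integration by parts with boundary terms vanishing exactly by the key lemma, so that no point-evaluation term survives; hence $|\langle g,f\rangle|\le \|f\|_{B_{\infty,1}(\mathbb{U})}\int_\mathbb{U}|g'|(\Im\omega)^{\alpha+1}\,dA\le C\|f\|_{B_\infty(\mathbb{U})}\,\|g\|_{L^1_a}$, using the standard derivative estimate for Bergman functions. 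For surjectivity, given $\Lambda\in (L^1_a(\mathbb{U},\mu_\alpha))^{*}$ I would extend it by Hahn--Banach to $h\in L^\infty(\mathbb{U},\mu_\alpha)$ with $\Lambda g=\int_\mathbb{U} g\bar h\,d\mu_\alpha$, apply the weighted Bergman projection $P_\alpha$ of $\mathbb{U}$ (kernel $K_\alpha(\omega,\zeta)=c_\alpha(\omega-\bar\zeta)^{-(\alpha+2)}$), and set $f:=P_\alpha h-(P_\alpha h)(i)$. Self-adjointness of $P_\alpha$ gives $\int g\bar h\,d\mu_\alpha=\int g\,\overline{P_\alpha h}\,d\mu_\alpha$ for analytic $g$, and the key lemma removes the subtracted constant, so $\Lambda g=\int_\mathbb{U} g\bar f\,d\mu_\alpha$ with $f\in B_\infty(\mathbb{U},i)$ and $\|f\|_{B_\infty}\le C\|\Lambda\|$. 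An open-mapping argument, or a direct lower bound obtained by testing against normalized reproducing kernels, then upgrades the continuous bijection to an isomorphism.

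The hard part, and the reason I would not read the pairing off from $V$ directly, is that the conjugation in the sesquilinear pairing introduces a non-analytic unimodular Cayley factor $(\omega+i)^{\alpha+2}/\overline{(\omega+i)^{\alpha+2}}$: the clean integral pairing on $\mathbb{U}$ does \emph{not} pull back to the clean integral pairing on $\mathbb{D}$, so transference supplies the isometries and the norm equivalences but not the duality itself. The two technical inputs I expect to cost the most effort are the boundedness of the Bergman projection $P_\alpha:L^\infty(\mathbb{U})\to B_\infty(\mathbb{U})$, most efficiently imported from the disk through $V$ or proved directly from a kernel-derivative estimate $\int_\mathbb{U}|\partial_\omega K_\alpha(\omega,\zeta)|\,d\mu_\alpha(\zeta)\le C/\Im\omega$, and the careful justification of the vanishing boundary terms underlying both the key lemma and the Littlewood--Paley identity.
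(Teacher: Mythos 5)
The paper offers no proof to compare yours against: Theorem \ref{Theorem 7} is quoted from Kang \cite{Kang} without argument, so your proposal can only be measured against the statement itself and the standard disk-to-half-plane route. Judged that way, your skeleton is essentially the correct one, and your key lemma is both true and genuinely the crux: for $g\in L^1_a(\mathbb{U},\mu_\alpha)$ the line integrals $I(y)=\int_{\mathbb{R}}g(x+iy)\,dx$ are finite for a.e.\ $y$ by Fubini, constant in $y$ by a rectangle-contour argument whose vertical sides vanish along a subsequence $R_n\to\infty$ because $\|g\|_{L^1_a}<\infty$, and the divergence of $\int_0^\infty y^\alpha\,dy$ then forces $I\equiv 0$, hence $\int_{\mathbb{U}}g\,d\mu_\alpha=0$. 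This is precisely what makes the half-plane dual the codimension-one space $B_\infty(\mathbb{U},i)$ rather than all of $B_\infty(\mathbb{U})$, and your diagnosis that the Cayley transform transports norms but not the sesquilinear pairing (a non-analytic unimodular factor survives) is correct and consistent with how the paper itself uses the transform in Proposition \ref{prop1.2}, which moves norms only.

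Two steps need repair, however. First, the pairing $\int_{\mathbb{U}}g\overline{f}\,d\mu_\alpha$ need not converge absolutely for general $g\in L^1_a$ and $f\in B_\infty$ (half-plane Bloch functions grow like the hyperbolic distance to $i$, which is not integrable against every $L^1_a$ density), so you cannot start from that integral and ``integrate by parts''; the honest order is to \emph{define} the functional by the absolutely convergent Littlewood--Paley form, which your estimate $\int_{\mathbb{U}}|g'(\omega)|(\Im\omega)^{\alpha+1}\,dA(\omega)\le C\|g\|_{L^1_a}$ supports, and then identify it with the integral pairing on a dense class of rapidly decaying $g$. Second, and more seriously, your surjectivity step breaks at the exponent you chose: on $\mathbb{U}$ one has $\int_{\mathbb{U}}(\Im\zeta)^\alpha|\omega-\overline{\zeta}|^{-(\alpha+2)}\,dA(\zeta)=\infty$ (logarithmic divergence at infinity), equivalently $K_\alpha(\cdot,\omega)\notin L^1_a(\mathbb{U},\mu_\alpha)$ --- in contrast with the disk, where the analogous integral is finite, of order $\log\frac{1}{1-|w|^2}$ --- so the Fubini interchange behind ``self-adjointness of $P_\alpha$'' is unavailable, and testing against reproducing kernels is likewise illegal since they do not belong to $L^1_a$. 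The standard repair is a kernel with raised exponent $2+\alpha+t$, $t>0$: exactly the operator $T$ (and its conjugate $S$) that this paper imports in Proposition \ref{prop 2} from Zhu's Lemma 5.14, which renders every double integral absolutely convergent and still yields a representative $f=Ph-(Ph)(i)\in B_\infty(\mathbb{U},i)$ for the functional. With these two adjustments your architecture --- the annihilation-of-constants lemma, Littlewood--Paley boundedness, Hahn--Banach plus a (raised) projection --- is sound and is, in substance, the known route to Kang's theorem.
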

With the help of Theorem \ref{Theorem 7} above, we determine the predual space of $L^{1}_{a}(\mathbb{U},\mu_{\alpha})$, that is, a set whose dual is $L^{1}_{a}(\mathbb{U},\mu_{\alpha})$, but first we state some results.\\
Let $\mathbb{C} (\overline{\mathbb{U}})$ be the algebra of complex valued continuous functions on $\overline{\mathbb{U}}$, and  $\mathbb{C}_{\circ} (\overline{\mathbb{U}})$ be the subalgebra of $\mathbb{C} (\overline{\mathbb{U}})$ consisting of functions $f$ such that $f(\omega) \to 0 $ as $\Im(\omega)\to 0$.
\begin{prop}\label{prop 1}
$\mathbb{C}_{\circ} (\overline{\mathbb{U}}):=\{ g\circ\psi^{-1} :g\in\mathbb{C}_{\circ} (\overline{\mathbb{D}})\}. $
\end{prop}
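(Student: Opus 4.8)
The plan is to realize the Cayley transform as a homeomorphism between the two relevant compact boundaries and then to transport the vanishing condition across it. First I would record the purely topological fact that $\psi(z)=\frac{i(1+z)}{1-z}$, being a Möbius transformation, is a homeomorphism of the Riemann sphere $\widehat{\mathbb{C}}=\mathbb{C}\cup\{\infty\}$ onto itself; it carries the closed disc $\overline{\mathbb{D}}$ onto the closed region $\overline{\mathbb{U}}\cup\{\infty\}$ and restricts to a homeomorphism of the circle $\partial\mathbb{D}$ onto the extended line $\mathbb{R}\cup\{\infty\}$. A direct computation gives $\psi(1)=\infty$ (equivalently $\psi^{-1}(\infty)=1$), while every other boundary point of $\mathbb{D}$ is sent into $\mathbb{R}$ (for instance $\psi(-1)=0$, $\psi(i)=-1$, $\psi(-i)=1$). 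Thus the single point $z=1$ is exactly the preimage of the point at infinity, and $\mathbb{C}_{\circ}(\overline{\mathbb{D}})$ is to be read as those $g\in\mathbb{C}(\overline{\mathbb{D}})$ that vanish on all of $\partial\mathbb{D}$. This boundary dictionary is what drives the whole argument.

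With this in hand I would establish the two inclusions. For the inclusion $\{g\circ\psi^{-1}:g\in\mathbb{C}_{\circ}(\overline{\mathbb{D}})\}\subseteq\mathbb{C}_{\circ}(\overline{\mathbb{U}})$, I take $g\in\mathbb{C}_{\circ}(\overline{\mathbb{D}})$ and set $f:=g\circ\psi^{-1}$. Since $\psi^{-1}$ extends continuously to $\overline{\mathbb{U}}\cup\{\infty\}$ and $g$ is continuous on $\overline{\mathbb{D}}$, the composite $f$ is continuous on $\overline{\mathbb{U}}$; moreover, if $\Im(\omega)\to 0$ then $\psi^{-1}(\omega)$ tends to $\partial\mathbb{D}$, and if $|\omega|\to\infty$ then $\psi^{-1}(\omega)\to 1\in\partial\mathbb{D}$, so in either case $g$ vanishes on the limiting boundary and hence $f(\omega)\to 0$. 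For the reverse inclusion I take $f\in\mathbb{C}_{\circ}(\overline{\mathbb{U}})$ and set $g:=f\circ\psi$; continuity of $\psi$ on $\overline{\mathbb{D}}$ together with the vanishing of $f$ at infinity makes $g$ continuous on $\overline{\mathbb{D}}$ (supplying in particular the value $g(1)=\lim_{|\omega|\to\infty}f(\omega)=0$), while $f$ vanishing on $\mathbb{R}$ makes $g$ vanish on $\partial\mathbb{D}\setminus\{1\}$. Hence $g\in\mathbb{C}_{\circ}(\overline{\mathbb{D}})$ and $f=g\circ\psi^{-1}$. The two assignments $g\mapsto g\circ\psi^{-1}$ and $f\mapsto f\circ\psi$ are visibly mutually inverse, so the map is a bijection (indeed an isometric algebra isomorphism) between the two spaces.

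The main obstacle, and really the only nonroutine point, is the behaviour at the boundary point $z=1$, which corresponds to $\omega=\infty$. Vanishing of $g$ on the \emph{full} circle $\partial\mathbb{D}$ forces $f$ to vanish not only as $\Im(\omega)\to 0$ but also as $|\omega|\to\infty$; conversely, continuity of $g$ at $z=1$ is precisely the requirement that $f$ admit the limit $0$ at infinity. I would therefore make explicit that $\overline{\mathbb{U}}$ is understood as the closure of $\mathbb{U}$ in $\widehat{\mathbb{C}}$, so that its boundary is $\mathbb{R}\cup\{\infty\}$ and the defining vanishing condition for $\mathbb{C}_{\circ}(\overline{\mathbb{U}})$ incorporates decay at infinity; under this natural reading the two descriptions agree. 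Everything else reduces to the standard facts that composition with a homeomorphism preserves continuity and that a Möbius map is a homeomorphism of the sphere, so no delicate estimates are required.
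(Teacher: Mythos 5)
Your proof is correct, and it takes a more explicit route than the paper's. The paper argues purely through the compact-exhaustion characterization of $C_0$-spaces: since $\psi$ is a homeomorphism of $\mathbb{D}$ onto $\mathbb{U}$, a set $\mathbb{K}\subset\mathbb{U}$ is compact if and only if $\psi^{-1}(\mathbb{K})$ is compact in $\mathbb{D}$, and for $f$ with $\sup_{w\in\mathbb{U}\setminus\mathbb{K}}|f(w)|<\epsilon$ one gets $\sup_{z\in\mathbb{D}\setminus\psi^{-1}(\mathbb{K})}|f\circ\psi(z)|=\sup_{w\in\mathbb{U}\setminus\mathbb{K}}|f(w)|<\epsilon$. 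That is all the paper writes: it transports the ``small outside a compact set'' condition in one direction and leaves the reverse inclusion, the extension to the \emph{closed} regions $\overline{\mathbb{D}}$ and $\overline{\mathbb{U}}$, and the behaviour at the distinguished point implicit. Your sphere-compactification argument supplies exactly what is suppressed there: that the M\"obius map $\psi$ extends to a homeomorphism of $\widehat{\mathbb{C}}$ matching $\partial\mathbb{D}$ with $\mathbb{R}\cup\{\infty\}$ and $z=1$ with $\omega=\infty$, plus a genuine two-sided verification, including continuity of $g=f\circ\psi$ at $z=1$. The two arguments rest on the same transport-by-homeomorphism idea (the paper's compact-exhaustion condition is just the one-point-compactification version of your boundary dictionary), but yours is the complete one.

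Your definitional remark is also substantive rather than pedantic, and it points at a real gap in the paper's own formulation. As literally defined in Section 2, $\mathbb{C}_{\circ}(\overline{\mathbb{U}})$ requires only $f(\omega)\to 0$ as $\Im(\omega)\to 0$; under that reading the proposition is false. For instance $f(\omega)=\min(\Im(\omega),1)$ satisfies the stated condition, yet $f\circ\psi$ has no limit at $z=1$ (it tends to $1$ along the radius $z=r\to 1^-$, since $\Im(\psi(r))\to\infty$, and to $0$ along $\partial\mathbb{D}$), so $f$ is not of the form $g\circ\psi^{-1}$ with $g\in\mathbb{C}_{\circ}(\overline{\mathbb{D}})$. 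The statement is true precisely under the reading you adopt, where the vanishing condition includes decay at infinity --- which is the compact-exhaustion condition the paper's proof silently uses in place of its stated definition. So your proof not only matches the intended content but makes explicit the decay-at-infinity hypothesis, and the special role of $z=1\leftrightarrow\infty$, that the paper's one-directional sketch obscures.
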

\begin{proof}
Let $\mathbb{K}\subset \mathbb{U}$ be compact. Since Cayley transform $\psi: \mathbb{D}\to \mathbb{U}$ is a continuous bijection, it follows that $\mathbb{K}\subset \mathbb{U}$ is compact if and only if $\psi^{-1}(\mathbb{K})$ is compact in $\mathbb{D}$. If $f\in \mathbb{C}_{\circ}(\mathbb{U})$ and $\epsilon>0$, then there exists $\mathbb{K}$ compact in $\mathbb{U}$ such that $\sup_{w\in\mathbb{U}\setminus\mathbb{K}}|f(w)|< \epsilon$.\\
Now, $g=f\circ \psi$ is continuous on $\mathbb{D}$ with $f=g\circ \psi^{-1},$ and
\begin{eqnarray*}
   \sup_{z\in\mathbb{D}\setminus\psi^{-1}(\mathbb{K})}|g(z)|&=&\sup_{w\in\mathbb{U}\setminus\mathbb{K}}|f(w)| < \epsilon.
\end{eqnarray*}
\end{proof}
\begin{prop}\label{prop1.2}
Let $C_{\psi}$ be the composition by $\psi$ operator. Then
\begin{enumerate}
  \item $f\in B_{\infty}(\uP)$ if and only if $C_{\psi}f \in B_{\infty}(\D)$. In particular, $\|f\|_{B_{\infty,1}(\uP)}=\tfrac1{2}\|C_{\psi}f\|_{B_{\infty,1}(\D)}.$
  \item $f\in B_{\infty,\circ}(\uP)$ if and only if $C_{\psi}f \in B_{\infty,\circ}(\D)$.
  \item $f\in L^1(\uP,\mu_\al)$ if and only if $S_{\psi}f \in L^1(\D,m_\al)$. In particular, $\|f\|_{L_a^1(\uP,\mu_\al)}=\tfrac1{2^{\al}}\|S_{\psi}f\|_{L^1(\D,m_\al)}$.
  \item $f\in L^{\infty}(\uP,\mu_\al)$ if and only if $C_{\psi}f \in L^{\infty}(\D, m_\al)$.
\end{enumerate}
\end{prop}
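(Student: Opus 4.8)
The engine of the proof is two elementary identities for the Cayley transform, which I would establish first since everything else is bookkeeping built on them. Differentiating $\psi(z)=\tfrac{i(1+z)}{1-z}$ gives $\psi'(z)=\tfrac{2i}{(1-z)^2}$, hence $|\psi'(z)|=\tfrac{2}{|1-z|^2}$; and writing $(1+z)\overline{(1-z)}=(1-|z|^2)+2i\,\Im z$ yields
\[
\Im\psi(z)=\frac{1-|z|^2}{|1-z|^2},\qquad z\in\D .
\]
Combining the two produces the master identity
\[
(1-|z|^2)\,|\psi'(z)| \;=\; 2\,\Im\psi(z),\qquad z\in\D ,
\]
which drives parts (1) and (2).

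For (1) I would set $g=C_\psi f$, so $g'(z)=f'(\psi(z))\,\psi'(z)$ by the chain rule; multiplying by $(1-|z|^2)$ and invoking the master identity gives the pointwise equality $(1-|z|^2)|g'(z)|=2\,\Im\psi(z)\,|f'(\psi(z))|$. Since $\psi$ is a bijection of $\D$ onto $\uP$, taking the supremum over $z\in\D$ (equivalently over $\omega=\psi(z)\in\uP$) yields $\|C_\psi f\|_{B_{\infty,1}(\D)}=2\|f\|_{B_{\infty,1}(\uP)}$, which simultaneously gives the norm identity and the equivalence, finiteness on one side forcing finiteness on the other. Part (2) uses the same pointwise equality, now reading off the boundary vanishing of $(1-|z|^2)|g'(z)|$ from that of $\Im\omega\,|f'(\omega)|$. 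Here lies the one genuinely delicate point, and the step I expect to be the main obstacle: the circle $\partial\D$ corresponds under $\psi$ to $\R\cup\{\infty\}$, and while $z\to\zeta\in\partial\D\setminus\{1\}$ sends $\Im\psi(z)\to0$, the approach $z\to1$ forces $\psi(z)\to\infty$ with $\Im\psi(z)\to\infty$, so the literal condition ``$\Im\omega\to0$'' does not by itself see the point at infinity. I would resolve this by interpreting the little-Bloch vanishing over the full boundary $\partial\uP=\R\cup\{\infty\}$ (equivalently, by using that $\psi$ extends to a homeomorphism $\overline{\D}\to\overline{\uP}$ in $\hat{\C}$), so that $|z|\to1$ matches $\omega\to\partial\uP$ and the vanishing transfers in both directions.

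For (3) I would change variables $\omega=\psi(z)$ in $\int_{\uP}|f(\omega)|(\Im\omega)^\al\,dA(\omega)$, using that a holomorphic change of variable scales area measure by $|\psi'(z)|^2$. Substituting $\Im\psi(z)=\tfrac{1-|z|^2}{|1-z|^2}$ and $|\psi'(z)|^2=\tfrac{4}{|1-z|^4}$ collects the integrand into $|f(\psi(z))|\,\tfrac{4(1-|z|^2)^\al}{|1-z|^{2\al+4}}\,dA(z)$; recognising $|\psi'(z)|^{\al+2}=\tfrac{2^{\al+2}}{|1-z|^{2\al+4}}$ and $dm_\al(z)=(1-|z|^2)^\al\,dA(z)$ then rewrites this as $\tfrac1{2^\al}|S_\psi f(z)|\,dm_\al(z)$ with $S_\psi f=(\psi')^{\al+2}\,f\circ\psi$, since $\tfrac1{2^\al}\cdot2^{\al+2}=4$. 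This gives $\|f\|_{L_a^1(\uP,\mu_\al)}=\tfrac1{2^\al}\|S_\psi f\|_{L^1(\D,m_\al)}$ together with the asserted equivalence. Finally, for (4) I would observe that $\psi$ is a biholomorphism, hence carries Lebesgue-null sets to null sets in both directions; since the weights $(\Im\omega)^\al$ and $(1-|z|^2)^\al$ are finite and positive a.e., the spaces $L^\infty(\mu_\al)$ and $L^\infty(m_\al)$ coincide with the unweighted $L^\infty$ spaces, and the plain composition $C_\psi$ (carrying no Jacobian factor) preserves essential suprema, giving the equivalence at once. Apart from the boundary-at-infinity matching in (2), every assertion is a direct consequence of the two Cayley identities and a single change of variables.
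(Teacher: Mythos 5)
Your proof is correct, and for parts (1), (3) and (4) it is essentially the paper's own argument: the same two Cayley identities $|\psi'(z)|=2/|1-z|^{2}$ and $\Im\psi(z)=(1-|z|^{2})/|1-z|^{2}$, the same supremum transfer under the bijection $\psi$, the same change of variables collecting the weight and Jacobian into $2^{-\alpha}|S_{\psi}f|\,dm_{\alpha}$, and the same observation that composition with a biholomorphism preserves essential boundedness. The one place you genuinely diverge is part (2), and your extra care there is not pedantry but a necessary repair. The paper's proof silently equates the limit $\Im(\omega)\to 0$ with $|z|\to 1$, which is false as stated: $z\to 1$ corresponds to $\omega\to\infty$, where $\Im\omega$ need not be small. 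Indeed, with the paper's literal definition of $B_{\infty,\circ}(\uP)$ (vanishing only as $\Im(\omega)\to 0$) the equivalence in (2) fails. Take $f(\omega)=\log(\omega+i)$, holomorphic on $\uP$ since $\Im(\omega+i)\geq 1$; then $\Im\omega\,|f'(\omega)|=\Im\omega/|\omega+i|\leq \Im\omega/(\Im\omega+1)$, which is bounded (so $f\in B_{\infty}(\uP)$) and tends to $0$ as $\Im(\omega)\to 0$, yet along $\omega=iy$, $y\to\infty$, it tends to $1$; since $\psi^{-1}(iy)=(y-1)/(y+1)\to 1^{-}$, the master identity gives $(1-|z|^{2})|(C_{\psi}f)'(z)|\to 2$ along that radius, so $C_{\psi}f\notin B_{\infty,\circ}(\D)$. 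Your resolution --- reading the little-Bloch vanishing over the full boundary $\R\cup\{\infty\}$ of $\uP$, equivalently via the homeomorphic extension of $\psi$ from $\overline{\D}$ onto the closure of $\uP$ in the Riemann sphere --- is exactly the hypothesis under which (2) becomes true, and it is what the paper implicitly uses without saying so. In short: same skeleton as the paper throughout, with your treatment of (2) closing a gap that the paper's own proof passes over.
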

\begin{proof}
For (1), if $f \in B_{\infty} (\uP)$, then by definition,
\begin{align*}
\|f\|_{B_{\infty,1} (\uP)} &= \sup_{\w\in\uP}(\Im(\w))|f'(\w)| = \sup_{z\in\D} \frac{1-|z|^2}{|1-z|^2}\left|f'(\psi(z))\right|\\
&= \frac1{2} \sup_{z\in\D} (1-|z|^2)|\psi'(z)||f'(\psi(z))| = \frac1{2} \sup_{z\in\D} (1-|z|^2)|(f\circ\psi)'(z)|\\
&=\frac1{2} \|f\circ\psi\|_{B_{\infty,1}(\D)}.
\end{align*}
For (2), we have $f\in B_{\infty,0}(\uP)$ is equivalent to
\begin{align*}
\lim_{\Im(\w)\to 0} (\Im(\w))|f'(\w)| &= \lim_{\Im(\psi(z)) \to 0} \frac{1-|z|^2}{|1-z|^2} |f'(\psi(z))| = \frac1{2} \sup_{|z| \to 1} (1-|z|^2)|(f\circ\psi)'(z)|=0,
\end{align*}
which in turn is equivalent to $f\circ\psi \in B_{\infty,0}(\D)$, as desired. For $f\in L_a^1(\uP,\mu_\al)$, we have
\begin{align*}
\|f\|_{L_a^1(\uP,\mu_\al)} &= \int_{\uP}|f(\w)|\,d\mu_\al(\w)\\
&= \int_{\uP}|f(\w)|\Im (\w)^\al \,dA(\w)\\
&= \int_{\D}|f(\psi(z))|\left(\frac{1-|z|^2}{|1-z|^2} \right)^{\al}|\psi'(z)|^2 \,dA(z)\\
&= \tfrac1{2^{\al}} \int_{\D}|f(\psi(z))| |\psi'(z)|^{\al +2}\,dm_{\al}(z)\\
&= \tfrac1{2^{\al}} \int_{\D}(\psi'(z))^{\g}|f(\psi(z))|\,dm_{\al}(z)\\
&= \tfrac1{2^{\al}} \|S_{\psi}f\|_{L^1(\D, m_{\al})},
\end{align*}
which proves (3). Now, $f\in L^{\infty}(\uP,\mu_{\al})$ means that $f$ is essentially bounded which implies that $f\circ \psi$ is essentially bounded as well. Since $\psi$ is an invertible mapping from $\D$ onto $\uP$, it follows that $f\circ \psi \in L^{\infty}(\D,m_{\al})$. The converse follows similarly. This completes the proof.
\end{proof}
\begin{rem}
It is easy to verify that $C_{\psi^{-1}} = C_{\psi}^{-1}$. Proposition \ref{prop1.2} above therefore implies that $C_{\psi}$ is an is an isometry up to a constant and at the same time invertible on the respective spaces with the inverse also acting on the appropriate spaces.\\
More generally, let $\{V_1,V_2\}=\{\D,\uP\}$, and let
$LF(V_i,V_j)$ denote the collection of conformal mappings from $V_i$ onto $V_j$. Then $LF(V_i,V_i)=\Aut(V_i)$, and if $h\in LF(V_i,V_j)$, then $g\in\Aut(V_j)\mapsto h^{-1}\circ g\circ h\in\Aut(V_i)$ is an isomorphism from $\Aut(V_i)$ onto $\Aut(V_j)$.
For each $g\in LF(V_i,V_j)$, we define a weighted composition operator $S_g:\;\Hol(V_j)\;\to\; \Hol(V_i) $,  by
\begin{equation} S_gf(z)\;=\;(g'(z))^\g f(g(z)),\quad\mbox{ for all } z\in V_i.\end{equation}
We note that if $g\in LF(V_i,V_j)$ and $h\in LF(V_j,V_i)$, then it is clear by chain rule that $S_h S_g=S_{gh}$ and $S_g^{-1}=S_{g^{-1}}$.
\end{rem}
Using Propositions \ref{prop 1} and \ref{prop1.2} above, we obtain the following result which is the upper half-plane analogue of \cite[Lemma 5.14]{K.ZH}.
\begin{prop}\label{prop 2}
For $t>0$, $\alpha > -1$, let the integral operator $T$ on $\mathcal{H}(\mathbb{D})$ be defined by
\begin{eqnarray*}
  T f(z) &=& (1-|z|^{2})^{t}\int_{\mathbb{D}}\frac{f(w)}{(1-z\overline{w})^{2+t+\alpha}}dm_{\alpha}(w).
\end{eqnarray*}
Let $S$ be the corresponding integral operator on $ \mathcal{H}(\mathbb{U})$ defined by \\$S:= C_{\psi ^{-1}} T C_{\psi}$. Then the following properties hold:
\begin{itemize}
  \item[(a)] $S= (\alpha +t+1)S^{2}$,
  \item [(b)] $S$ is a bounded embedding of $B_{\infty}(\mathbb{U})$ into $L^{\infty}(\mathbb{U})$ and
  \item [(c)] $S$ is an embedding of $B_{\infty,\circ}(\mathbb{U})$  into $\mathbb{C}_{\circ}(\mathbb{U})$.
\end{itemize}
\end{prop}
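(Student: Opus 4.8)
The plan is to reduce all three assertions to the corresponding facts for the disk operator $T$, which are exactly the content of \cite[Lemma 5.14]{K.ZH}, and then transport them to $\uP$ through the intertwining operator $C_\psi$. Concretely, Zhu's lemma provides, on $\D$, the three statements (a$'$) $T=(\al+t+1)T^2$, (b$'$) $T$ is a bounded embedding of $B_\infty(\D)$ into $L^\infty(\D)$, and (c$'$) $T$ is an embedding of $B_{\infty,\circ}(\D)$ into $\C_\circ(\D)$. The whole proof then rests on the single structural identity $S=C_{\psi^{-1}}TC_\psi$ together with the facts, recorded in the Remark, that $C_{\psi^{-1}}=C_\psi^{-1}$, and that, by Propositions \ref{prop 1} and \ref{prop1.2}, $C_\psi$ is a linear isomorphism (isometric up to a constant) carrying $B_\infty(\uP)$ onto $B_\infty(\D)$, $B_{\infty,\circ}(\uP)$ onto $B_{\infty,\circ}(\D)$, $L^\infty(\uP)$ onto $L^\infty(\D)$, and $\C_\circ(\uP)$ onto $\C_\circ(\D)$.

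For (a) I would argue purely algebraically. Since $C_\psi C_{\psi^{-1}}=I$, the two middle factors cancel and
\[
S^2=C_{\psi^{-1}}TC_\psi C_{\psi^{-1}}TC_\psi=C_{\psi^{-1}}T^2C_\psi .
\]
Multiplying (a$'$) on the left by $C_{\psi^{-1}}$ and on the right by $C_\psi$ then gives $C_{\psi^{-1}}TC_\psi=(\al+t+1)C_{\psi^{-1}}T^2C_\psi$, that is $S=(\al+t+1)S^2$, the scalar $\al+t+1$ passing freely through the composition operators. No analytic input beyond Zhu's identity is needed here.

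For (b) and (c) I would use the factorization of $S$ as a three-step composition and track the target spaces. For (b), consider
\[
B_\infty(\uP)\xrightarrow{\,C_\psi\,}B_\infty(\D)\xrightarrow{\,T\,}L^\infty(\D)\xrightarrow{\,C_{\psi^{-1}}\,}L^\infty(\uP).
\]
The first arrow is bounded and invertible by Proposition \ref{prop1.2}(1) (noting $\psi(0)=i$, so the point-evaluation terms $|f(i)|$ and $|(C_\psi f)(0)|$ agree while the seminorms differ only by the factor $\tfrac12$); the middle arrow is a bounded embedding by (b$'$); the last arrow is bounded and invertible by Proposition \ref{prop1.2}(4). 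A composition of a bounded embedding with isomorphisms on either side is again a bounded embedding, which gives (b). For (c) I would replace the chain by
\[
B_{\infty,\circ}(\uP)\xrightarrow{\,C_\psi\,}B_{\infty,\circ}(\D)\xrightarrow{\,T\,}\C_\circ(\D)\xrightarrow{\,C_{\psi^{-1}}\,}\C_\circ(\uP),
\]
where the two outer arrows are isomorphisms by Proposition \ref{prop1.2}(2) and Proposition \ref{prop 1} respectively, and the middle one is (c$'$); the same composition principle yields (c).

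The two diagram chases are routine; the only point demanding care, and the place where I expect the argument to be most delicate, is checking that the disk operator written in the statement is verbatim the operator of \cite[Lemma 5.14]{K.ZH} (so that (a$'$)–(c$'$) may be quoted) and that $C_\psi$ genuinely matches the two endpoint spaces in each chain with equivalent norms. In particular one must confirm that $C_\psi$ respects the \emph{full} Bloch norm and not merely the seminorm, which is precisely why the identification $\psi(0)=i$ and the role of the value at $i$ are essential. Once this bookkeeping is settled, the transfer is automatic and no independent estimate on $\uP$ is required.
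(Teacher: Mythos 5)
Your proof is correct and follows essentially the same route as the paper's: the paper likewise obtains (a) by conjugating Zhu's identity $T=(\alpha+t+1)T^{2}$ with $C_{\psi}$ (using $C_{\psi}C_{\psi^{-1}}=I$), and proves (b) and (c) via exactly your two three-arrow chains through $B_{\infty}(\mathbb{D})\to L^{\infty}(\mathbb{D})$ and $B_{\infty,\circ}(\mathbb{D})\to\mathbb{C}_{\circ}(\mathbb{D})$, quoting \cite[Lemma 5.14]{K.ZH} together with Propositions \ref{prop 1} and \ref{prop1.2}. Your added bookkeeping (matching the statement's $T$ verbatim with Zhu's operator, and checking via $\psi(0)=i$ that $C_{\psi}$ respects the full Bloch norm rather than just the seminorm) is in fact more careful than the paper, which leaves those points implicit.
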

\begin{proof} From \cite[Lemma 5.14]{K.ZH}, we have,
\begin{eqnarray*}
   S &= &C_{\psi ^{-1}} T C_{\psi}=C_{\psi ^{-1}}(\alpha+t+1) T^{2} C_{\psi}  \\
   &= &(\alpha+t+1)C_{\psi ^{-1}} T^{2} C_{\psi}  \\
   &= &(\alpha+t+1)S^{2},
\end{eqnarray*}
which proves (a).\\
For (b), we have
\[B_{\infty}(\mathbb{U})\xrightarrow[]{C_{\psi}}B_{\infty}(\mathbb{D})\xrightarrow[]{T}L^{\infty}(\mathbb{D})\xrightarrow[]{C_{\psi ^{-1}}}L^{\infty}(\mathbb{U}).\]  Now, $C_{\psi}$ is an isometry of $B_{\infty}(\mathbb{U})$ onto $B_{\infty}(\mathbb{D})$ up to constant, $T$ is a bounded embedding of $B_{\infty}(\mathbb{D})$ into $L^{\infty}(\mathbb{D})$ \cite[Lemma 5.14]{K.ZH}, $C_{\psi^{-1}}$ is also an isometry of $L^{\infty}(\mathbb{D})$ onto $L^{\infty}(\mathbb{U})$, it therefore follows that $S=C_{\psi ^{-1}} T C_{\psi}$ is a bounded embedding of $B_{\infty}(\mathbb{U})$ into $L^{\infty}(\mathbb{U})$.\\
For (c), we have
\[B_{\infty,\circ}(\mathbb{U})\xrightarrow[]{C_{\psi}}B_{\infty,\circ}(\mathbb{D})\xrightarrow[]{T}\mathbb{C_{\circ}}(\mathbb{D})
\xrightarrow[]{C_{\psi^{-1}}}\mathbb{C_{\circ}}(\mathbb{U}).\]
$C_{\psi}$ is a bijection of $B_{\infty,\circ}(\mathbb{U})$ into $B_{\infty,\circ}(\mathbb{D})$, $T$ is an embedding of $B_{\infty,\circ}(\mathbb{D})$ into $\mathbb{C_{\circ}}(\mathbb{D})$ \cite[Lemma 5.14]{K.ZH}, and on the other hand, $C_{\psi^{-1}}$ is also a bijection of $\mathbb{C_{\circ}}(\mathbb{D})$ into $\mathbb{C_{\circ}}(\mathbb{U})$. Therefore $S=C_{\psi ^{-1}} T C_{\psi}$ is an embedding of $B_{\infty,\circ}(\mathbb{U})$ into $\mathbb{C_{\circ}}(\mathbb{U})$, which completes the proof.
\end{proof}

We now establish the predual space of $L^{1}_{a}(\mathbb{U},\mu_{\alpha})$ as we give in the following theorem:
\begin{thm}\label{Theorem 9}
For any $\alpha>-1$, we have;
\begin{eqnarray*}
  (B_{\infty,\circ}(\mathbb{U},i))^{*} &\approx& L^{1}_{a}(\mathbb{U},\mu_{\alpha}),
\end{eqnarray*}
under the pairing
\begin{eqnarray*}
  \langle g,f\rangle &=& \int_{\mathbb{U}}g(\w)\overline{f(\w)}d\mu_{\alpha}(\w),
\end{eqnarray*}
where $g \in B_{\infty,\circ}(\mathbb{U},i)$ and $f \in L^{1}_{a}(\mathbb{U},\mu_{\alpha})$. Here, $B_{\infty,\circ}(\mathbb{U},i)$ is equipped with the same norm as $B_{\infty}(\mathbb{U},i)$.
\end{thm}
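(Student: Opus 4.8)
The plan is to build an explicit isometric isomorphism $\Phi:L^{1}_{a}(\uP,\mu_{\al})\to (B_{\infty,\circ}(\uP,i))^{*}$ implemented by the stated pairing, following the disk scheme of \cite{K.ZH} but carried out directly on $\uP$ through the operator $S$ of Proposition \ref{prop 2}, with Theorem \ref{Theorem 7} supplying the dual pairing on the ``big'' side. Concretely, I would set $\Phi(g)f=\langle g,f\rangle=\int_{\uP}g(\w)\overline{f(\w)}\,d\mu_{\al}(\w)$ for $g\in L^{1}_{a}(\uP,\mu_{\al})$ and $f\in B_{\infty,\circ}(\uP,i)$. Since $B_{\infty,\circ}(\uP,i)$ is a subspace of $B_{\infty}(\uP,i)$ carrying the identical norm, Theorem \ref{Theorem 7} at once gives $|\Phi(g)f|\le \|g\|_{L^{1}_{a}(\uP,\mu_{\al})}\,\|f\|_{B_{\infty,i}}$, so $\Phi$ is well defined and bounded. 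For injectivity and, more precisely, for the isometry, I would use that $B_{\infty,\circ}(\uP,i)$ is weak-$*$ dense in $B_{\infty}(\uP,i)=(L^{1}_{a}(\uP,\mu_{\al}))^{*}$, a fact transferred from the disk by Proposition \ref{prop1.2}: if $\langle g,f\rangle=0$ for all $f\in B_{\infty,\circ}(\uP,i)$, then the same holds for all $f\in B_{\infty}(\uP,i)$, whence $g=0$ by nondegeneracy in Theorem \ref{Theorem 7}, while the same density yields $\sup\{|\langle g,f\rangle|:f\in B_{\infty,\circ}(\uP,i),\ \|f\|_{B_{\infty,i}}\le 1\}=\|g\|_{L^{1}_{a}(\uP,\mu_{\al})}$.

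The substance of the proof is surjectivity. Given $F\in (B_{\infty,\circ}(\uP,i))^{*}$, I would first extend it to $B_{\infty,\circ}(\uP)$: since this space splits as $\C\oplus B_{\infty,\circ}(\uP,i)$ via $f\mapsto f(i)$, the formula $\widetilde{F}(f)=F(f-f(i))$ gives a bounded extension agreeing with $F$ on functions vanishing at $i$. By Proposition \ref{prop 2}(c), $S$ is an embedding of $B_{\infty,\circ}(\uP)$ into $\mathbb{C}_{\circ}(\uP)$, i.e. a bounded-below isomorphism onto a closed subspace; hence $Sf\mapsto\widetilde{F}(f)$ is a bounded linear functional on $S(B_{\infty,\circ}(\uP))\subset\mathbb{C}_{\circ}(\uP)$. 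Extending it by Hahn--Banach to all of $\mathbb{C}_{\circ}(\uP)$ and applying the Riesz representation theorem produces a finite complex Borel measure $\nu$ on $\uP$ with $F(f)=\int_{\uP}\overline{(Sf)(\w)}\,d\nu(\w)$ for every $f\in B_{\infty,\circ}(\uP,i)$, the conjugation being chosen to match the conjugate-linearity of the pairing in $f$.

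It then remains to convert this measure representation into an $L^{1}_{a}$ representation. Writing $S=C_{\psi^{-1}}TC_{\psi}$ explicitly and using Fubini's theorem to interchange the integration against $\nu$ with the kernel integral defining $T$, the functional takes the desired form $F(f)=\int_{\uP}g(z)\overline{f(z)}\,d\mu_{\al}(z)$, where $g$ arises by integrating the conjugate of the kernel of $S$ against $\nu$; because that kernel is anti-analytic in the variable over which $f$ is integrated, its conjugate is analytic, so $g\in\Hol(\uP)$. Here the identity $S=(\al+t+1)S^{2}$ of Proposition \ref{prop 2}(a) becomes decisive: it says $(\al+t+1)S$ is idempotent, so the Bergman-type projection implicit in $S$ reproduces the representing function, which both forces $g$ into the analytic Bergman class and guarantees that $g$ reproduces $F$ on all of $B_{\infty,\circ}(\uP,i)$ rather than merely on a dense part; the ambiguity by an additive constant is removed by the normalization $f(i)=0$ together with the injectivity already established.

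I expect the main obstacle to be precisely this final conversion: verifying that the function obtained from the Riesz measure by integrating the conjugate kernel is genuinely $\mu_{\al}$-integrable and analytic, and that it reproduces $F$ on the whole little Bloch space. This rests on the Forelli--Rudin type integrability estimates behind \cite{K.ZH} transported to $\uP$ by the Cayley transform as in Propositions \ref{prop 1} and \ref{prop1.2}, combined with a careful use of the idempotent relation to close the reproducing identity. Everything else (boundedness, injectivity, the isometry, and the measure-theoretic representation) is comparatively routine once the weak-$*$ density and the mapping properties of $S$ from Proposition \ref{prop 2} are in hand.
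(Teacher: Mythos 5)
Your proposal follows essentially the same route as the paper's proof: the forward direction from Theorem \ref{Theorem 7}, and for surjectivity the embedding $S=C_{\psi^{-1}}TC_{\psi}$ of Proposition \ref{prop 2} followed by Hahn--Banach extension on $\mathbb{C}_{\circ}(\mathbb{U})$, the Riesz representation theorem, and Fubini to extract the representing function in $L^{1}_{a}(\mathbb{U},\mu_{\alpha})$, exactly the paper's transport of Zhu's disk argument via the Cayley transform. If anything, you are more careful at the endpoints than the paper is --- your explicit splitting $B_{\infty,\circ}(\mathbb{U})=\mathbb{C}\oplus B_{\infty,\circ}(\mathbb{U},i)$ and your use of the idempotent identity $S=(\alpha+t+1)S^{2}$ to close the reproducing identity on the whole space, where the paper instead verifies the representation only on polynomials, which it asserts (questionably, since nonconstant polynomials have infinite Bloch norm on $\mathbb{U}$) to be dense in $B_{\infty,\circ}(\mathbb{U},i)$.
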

\begin{proof}
If $f\in L^{1}_{a}(\mathbb{U},\mu_{\alpha})$, then by Theorem \ref{Theorem 7} above,
$g\longmapsto  \int_{\mathbb{U}}g(\w)\overline{f(\w)}d\mu_{\alpha}(\w)$
defines a bounded linear functional on $B_{\infty,\circ}(\mathbb{U},i)$. Conversely, if $F$ is a bounded linear functional on $B_{\infty,\circ}(\mathbb{U},i)$, we want to show that there exists a function $f\in L^{1}_{a}(\mathbb{U},\mu_{\alpha})$ such that $F(g) = \int_{\mathbb{U}}g(\w)\overline{f(\w)}d\mu_{\alpha}(\w)$ for $g$ in a dense set of $B_{\infty,\circ}(\mathbb{U},i)$.\\
Now we fix any positive parameter $t$ and consider the embedding $S$ of $B_{\infty,\circ}(\mathbb{U},i)$ into $\mathbb{C}_{\circ}(\mathbb{U})$ as given by Prop \ref{prop 2}.
The space $X=S(B_{\infty,\circ}(\mathbb{U},i))$ is a closed subspace of $\mathbb{C}_{\circ}(\mathbb{U})$ and $F\circ S^{-1}:X\rightarrow \mathbb{C}$ is a bounded linear functional on $X$ since $F$ and $S^{-1}$ are both bounded.
By the Hahn-Banach extension theorem, $F\circ S^{-1}$ extends to a bounded linear functional on $\mathbb{C}_{\circ}(\mathbb{U})$. By the Riesz representation theorem, there exists a finite weighted measure $\mu_{\alpha}$ on $\mathbb{U}$ such that $\|\mu_{\alpha}\|=\|F\circ S^{-1}\|$ and $F\circ S^{-1}(h)=\int_{\mathbb{U}}h(z)d\mu_{\alpha}(z)$, $h\in\mathbb{C}_{\circ}(\mathbb{U})$.
In particular, if $g$ is a polynomial (polynomials are dense in $B_{\infty,\circ}(\mathbb{U},i)$), then $F(g)=F\circ S^{-1}\circ S(g)=\int_{\mathbb{U}}Sg(z)d\mu_{\alpha}(z)$.
By Fubini's theorem, we have $F(g)=\int_{\mathbb{U}}g(\w)\overline{f(\w)}d\mu_{\alpha}(\w)$, where $f=C_{\psi ^{-1}} T C_{\psi}$ which is bounded since $T$ is bounded.
\end{proof}

\section{Groups of weighted composition operators on predual of $L^{1}_{a}(\mathbb{U},\mu_{\alpha})$}
As remarked in the section 1, the automorphisms of the upper half plane $\uP$ was classified into three distinct groups in \cite{BBMM}, namely: the scaling, the translation and the rotation groups. Since the corresponding groups of composition operators for the rotation group are defined on the analytic spaces of the unit disk, we shall only consider groups of composition operators associated with the scaling and the translation groups in this paper. It will turn out that these are strongly continuous groups of invertible isometries on the Bloch space $B_{\infty,\circ}(\uP,i)$. We shall identify the infinitesimal generator of each group and determine the spectra of both the generator as well as the resulting resolvents. These results complete the analysis of the adjoints of the weighted composition groups on the weighted Bergman spaces of the upper half plane initiated by \cite{BBMM} and \cite{Bon}. 
\subsection{Scaling group}
The automorphisms of this group are of the form $\vj_t(z)=k^tz$, where $z\in \uP$ and $k, t \in \R$ with $k\neq 0$. As noted in \cite{BBMM} and without loss of generality, we consider the analytic self maps $\varphi_{t}:\,\mathbb{U}\,\to\,\mathbb{U}$ of the form $\varphi_{t}(z)= e^{-t}z$ for $z\in \mathbb{U}$. The corresponding group of weighted composition operators on $L^{p}_{a}(\mathbb{U},\mu_{\alpha})$ is given by $T_{t}f(z) = e^{-t\gamma}f(e^{-t}z)$,
for all $f\in L^{p}_{a}(\mathbb{U},\mu_{\alpha})$, where $\gamma$=$\frac{\alpha+2}{p}$ and $1\leq p < \infty$.
For $p=1$, $(T_{t})_{t \geq 0}$ is defined on $L^{1}_{a}(\mathbb{U},\mu_{\alpha})$ with $\g=\al+2$.\\
Following Theorem \ref{Theorem 9}, the predual of $L^{1}_{a}(\mathbb{U},\mu_{\alpha})$ is given by the duality relation 
\begin{equation}\label{eq3.1}
(B_{\infty,\circ}(\mathbb{U},i))^{*}\approx  L^{1}_{a}(\mathbb{U},\mu_{\alpha}) 
\end{equation}
under the integral pairing
\begin{equation}\label{eq3.2}
\langle g,f \rangle = \int_{\mathbb{U}}g(w)\overline{f(w)}d\mu_{\alpha}(w),
\end{equation}
where $g\in B_{\infty,\circ}(\mathbb{U},i)$ and  $f \in L^{1}_{a}(\mathbb{U},\mu_{\alpha})$.\\
Using the duality pairing above, we obtain the corresponding group of weighted composition operators on $B_{\infty,\circ}(\mathbb{U},i)$ as below:\\
Let $g\in B_{\infty,\circ}(\mathbb{U},i)$ and  $f \in L^{1}_{a}(\mathbb{U},\mu_{\alpha})$, then,
\begin{eqnarray*}
  \langle g,T_{t}f \rangle &= &\int_{\mathbb{U}}g(z)\overline{e^{-t\gamma}f(e^{-t}z)}d\mu_{\alpha}(z)  \\
   &= &\int_{\mathbb{U}}g(z)e^{-t\gamma}\overline{f(e^{-t}z)}(\Im(z))^{\alpha}dA(z).
\end{eqnarray*}
By change of variables, let $\w=e^{-t}z$, then $z=e^{t}\w$,
$dA(\w)=e^{-2t}dA(z)$ and $\Im(z)=e^{t}Im(\w)$.
 Then, \begin{eqnarray*}
   \langle g,T_{t}f \rangle &=&\int_{\mathbb{U}}g(e^{t}\w)e^{-t\gamma}\overline{f(\w)}e^{\alpha t}(\Im(\w))^{\alpha}e^{2t}dA(\w)\\
        &=&\int_{\mathbb{U}}g(e^{t}\w)e^{-t\gamma}e^{t\gamma}\overline{f(\w)}d\mu_{\alpha}(\w)  \\
        &=&\int_{\mathbb{U}}g(e^{t}\w)\overline{f(\w)}d\mu_{\alpha}(\w)=\langle T^*_{t}g,f \rangle.
 \end{eqnarray*}
Now, we define $S_t:=T_t^*$ on $B_{\infty,\circ}(\mathbb{U},i)$ and therefore $S_{t}g(\w) :=  g(e^{t}\w)$ is a semigroup or group of composition operators defined on $B_{\infty,\circ}(\mathbb{U},i)$. We shall carry out a complete study of both the semigroup and spectral properties of this group. We begin by proving the strong continuity property.


\begin{thm}
Let $S_{t}g(w) :=  g(e^{t}w)$ be a semigroup of composition operators defined on $B_{\infty,\circ}(\mathbb{U},i)$. Then, $ (S_{t})_{t\in \mathbb{R}}$ is a strongly continuous group of isometries on $B_{\infty,\circ}(\mathbb{U},i).$
\end{thm}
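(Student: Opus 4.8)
The plan is to verify three things in turn---the group law, that each $S_t$ is an isometry, and strong continuity---after first observing that, since the $S_t$ will be isometries obeying the group law, it suffices to prove strong continuity at $t=0$. Indeed, once $\|S_tg-g\|\to0$ as $t\to0$ is known, the identity $S_{t+s}g-S_sg=S_s(S_tg-g)$ together with $\|S_s\|=1$ yields continuity of $t\mapsto S_tg$ at every $s\in\R$. The group law itself is immediate from $\vj_t(w)=e^tw$: since $\vj_t\circ\vj_s=\vj_{t+s}$ and $\vj_0=\mathrm{id}$, one has $S_tS_sg(w)=g(e^{s}e^{t}w)=S_{t+s}g(w)$ and $S_0=I$, so $\{S_t\}_{t\in\R}$ is a group with $S_{-t}=S_t^{-1}$.

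For both the isometry and the continuity estimate it is convenient to introduce the (continuous, non-analytic) Bloch density $H(w):=\Im(w)\,g'(w)$ on $\uP$, so that $\|g\|_{B_{\infty,1}(\uP)}=\sup_{w\in\uP}|H(w)|$, and, by the definition of the little Bloch space together with Proposition \ref{prop1.2}(2), $|H(w)|\to0$ as $w$ approaches the boundary $\R\cup\{\infty\}$ of $\uP$; that is, $H$ transports through the Cayley transform to a continuous function on the closed disk $\clD$ vanishing on the boundary circle. The key computation is that differentiation intertwines $S_t$ with a dilation of $H$: since $(S_tg)'(w)=e^tg'(e^tw)$ and $\Im(e^tw)=e^t\Im(w)$,
\begin{equation*}
\Im(w)\,(S_tg)'(w)=\Im(e^tw)\,g'(e^tw)=H(e^tw)=H(\vj_t(w)).
\end{equation*}
Taking suprema and using that $w\mapsto e^tw$ is a bijection of $\uP$ gives $\|S_tg\|_{B_{\infty,1}(\uP)}=\sup_w|H(\vj_t(w))|=\sup_u|H(u)|=\|g\|_{B_{\infty,1}(\uP)}$, so each $S_t$ is an isometry. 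Since the norm on $B_{\infty,\circ}(\uP,i)$ is the seminorm $\|\cdot\|_{B_{\infty,1}(\uP)}$, which is unchanged by adding a constant, we keep $S_t$ inside $B_{\infty,\circ}(\uP,i)$ by replacing $g\circ\vj_t$ with the representative $g\circ\vj_t-g(\vj_t(i))$ that vanishes at $i$; this normalization affects neither the isometry nor the continuity computation.

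The main content is strong continuity at $t=0$, and the same intertwining identity applied to $S_tg-g$ gives the reduction
\begin{equation*}
\|S_tg-g\|_{B_{\infty,1}(\uP)}=\sup_{w\in\uP}\bigl|H(\vj_t(w))-H(w)\bigr|=\|H\circ\vj_t-H\|_\infty .
\end{equation*}
Thus everything comes down to showing that the dilation group acts strongly continuously on $C_0$-functions, i.e. $\|H\circ\vj_t-H\|_\infty\to0$. The \emph{main obstacle} is that the supremum runs over all of $\uP$ while $\vj_t$ does \emph{not} converge to the identity uniformly on $\uP$ itself (for large $w$, $|e^tw-w|$ is large), so a naive compact-exhaustion argument on $\uP$ fails; this is precisely where the little Bloch decay of $H$ is indispensable. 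I would resolve it in the Cayley/disk model: $\tilde\vj_t:=\psi^{-1}\circ\vj_t\circ\psi$ is a one-parameter family of Möbius automorphisms of $\D$ extending to homeomorphisms of $\clD$, the map $(t,z)\mapsto\tilde\vj_t(z)$ is jointly continuous, and hence by compactness of $[-1,1]\times\clD$ we obtain $\sup_{z\in\clD}|\tilde\vj_t(z)-z|\to0$ as $t\to0$, i.e. $\vj_t\to\mathrm{id}$ uniformly on the compactification. Because $H$, transported to $\clD$, is continuous on a compact set and therefore uniformly continuous, the estimate $\|H\circ\vj_t-H\|_\infty\le\omega_H\bigl(\sup_z|\tilde\vj_t(z)-z|\bigr)\to0$ follows, where $\omega_H$ is the modulus of continuity of $H$. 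Combined with the reduction to $t=0$ via the group law and the isometry property, this shows that $(S_t)_{t\in\R}$ is a strongly continuous group of isometries on $B_{\infty,\circ}(\uP,i)$.
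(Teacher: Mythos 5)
Your proof is correct, and on the decisive step (strong continuity) it takes a genuinely different route from the paper's. Both arguments pass through the Cayley transform --- which is essentially forced, since, as you note, the dilations do not converge to the identity uniformly on $\uP$, and the decay of the Bloch density at the boundary point $\infty$ is only available through Proposition \ref{prop1.2}(2) --- but from there they diverge. The paper conjugates to the M\"obius maps $h_{a}(z)=\frac{z-a_{t}}{1-\overline{a}_{t}z}$ with $a_{t}=\frac{1-e^{t}}{1+e^{t}}$, invokes density of polynomials in $B_{\infty,\circ}(\D,0)$, and checks $\|C_{h_{a}}z^{n}-z^{n}\|\to 0$ on monomials; as written, its final display moves the limit $t\to 0$ inside the supremum over $z\in\D$ without justification (one needs a uniform bound such as $\sup_{z\in\D}(1-|z|^{2})\,n\left|h_{a}(z)^{n-1}h_{a}'(z)-z^{n-1}\right|\le C_{n}|a_{t}|$), and the passage from polynomials to general $g$ tacitly uses the uniform bound $\|S_{t}\|=1$. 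You instead work with the transported density $\tilde H=H\circ\psi$, continuous on $\clD$ and vanishing on $\partial\D$, and use the exact identity $\|S_{t}g-g\|_{B_{\infty,1}(\uP)}=\sup_{z\in\clD}\bigl|\tilde H(\tilde\vj_{t}(z))-\tilde H(z)\bigr|\le\omega_{\tilde H}\bigl(\sup_{z\in\clD}|\tilde\vj_{t}(z)-z|\bigr)$: this is quantitative, avoids any density argument, and treats all $g$ simultaneously (your compactness appeal for $\sup_{z}|\tilde\vj_{t}(z)-z|\to 0$ could even be replaced by the elementary estimate $|h_{a}(z)-z|\le 2|a|/(1-|a|)$). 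What your route buys is a gap-free, modulus-of-continuity argument; what the paper's route buys is the standard dense-subset template, useful when no compactification is at hand. Two further points in your favor: your isometry computation via $\Im(\w)(S_{t}g)'(\w)=H(e^{t}\w)$ is the paper's change of variables repackaged, so there is no divergence there; and your normalization $g\circ\vj_{t}-g(\vj_{t}(i))$ repairs an issue the paper passes over silently, namely that $S_{t}g(i)=g(e^{t}i)\neq 0$ in general, so the unnormalized $S_{t}$ does not map $B_{\infty,\circ}(\uP,i)$ into itself; since the duality pairing annihilates constants, your representative is the correct realization of $T_{t}^{*}$ on functions vanishing at $i$, and, as you verify, it disturbs neither the seminorm computations nor the group law.
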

\begin{proof}
It is clear from the definition that $ (S_{t})_{t\in \mathbb{R}}$ is a group. To prove that $ (S_{t})_{t\in \mathbb{R}}$ is an isometry on $B_{\infty,\circ}(\uP,i)$, we have;
\begin{eqnarray*}
  \|S_{t}g\|_{B_{\infty,\circ}(\mathbb{U},i)} &=& \sup_{\w\in\mathbb{U}}\Im(\w)|S_{t}g^{\prime}(\w)| \\
   &=& \sup_{\w\in\mathbb{U}}\Im(\w)e^{t}|g^{\prime}(e^{t}\w)|.
\end{eqnarray*}
Now by change of variables, let $z=e^{t}\w$ then $\w=e^{-t}z$, and $\Im(\w)=e^{-t}\Im(z)$. Therefore,
\begin{eqnarray*}
  \|S_{t}g\|_{B_{\infty,\circ}(\mathbb{U},i)}&=&\sup_{z\in\mathbb{U}}e^{-t}\Im(z)e^{t}|g^{\prime}(z)|\\
   &=&\sup_{z\in\mathbb{U}}\Im(z)|g^{\prime}(z)|  \\
   &=& \|g\|_{B_{\infty,\circ}(\mathbb{U},i)},\mbox{as desired}.
\end{eqnarray*}
For strongly continuity, we first take note that $S_{t}=C_{\varphi_{-t}}$ since $S_tg(\w)=g(\vj_t(\w))$. Then by Proposition \ref{prop1.2}, it is easy to see that $C_{\psi_{-t}}$ is strongly continuous on $B_{\infty,\circ}(\mathbb{U},i)$ if and only if $(C_{\psi^{-1}\circ\varphi_{-t}\circ\psi})_{t\in\mathbb{R}}$ is strongly continuous on $B_{\infty,\circ}(\mathbb{D},0)$, where $B_{\infty,\circ}(\D,0)$ is the subspace of $B_{\infty,\circ}(\mathbb{D})$ consisting of functions vanishing at point $0$.
Now by simple computation of $\psi^{-1}\circ\varphi_{-t}\circ\psi(z)$, we obtain;
\begin{eqnarray*}
   \psi^{-1}\circ\varphi_{-t}\circ\psi(z)&=&\frac{z-\frac{1-e^{t}}{1+e^{t}}}{1-\frac{1-e^{t}}{1+e^{t}}z}\\
   &=& \frac{z-a_{t}}{1-\overline{a}_{t}z},
\end{eqnarray*}
where $a_{t}=\frac{1-e^{t}}{1+e^{t}}$. As $t\to 0$, $a_t \to 0$. Let $h_{a}(z)= \frac{z-a_{t}}{1-\overline{a}_{t}z}=\psi^{-1}\circ\varphi_{-t}\circ\psi(z)$, then for strong continuity, it therefore suffices to show that $\|C_{h_{a}}f-f\|_{B_{\infty,\circ}(\mathbb{D},0)}\,\to\,0$ as $a\rightarrow0$ ($a_t\to 0$).
Using the density of polynomials in ${B_{\infty,\circ}(\mathbb{D},0)}$, let $f(z)=z^{n}$. Then $C_{h_{a}}z^{n}-z^{n} = (h_{a}(z))^{n}-z^{n}, n\geq1$, and
\begin{eqnarray*}
(C_{h_{a}}f-f)^{\prime}(z)&=& n[(h_{a}(z))^{n-1}h^{\prime}_{a}(z)-z^{n-1}].
\end{eqnarray*}
But $h_{a}(z)=\frac{z-a_{t}}{1-\overline{a}_{t}z}$, and hence $h^{\prime}_{a}(z) = \frac{1-a_{t}\overline{a}_{t}}{(1-\overline{a}_{t}z)^{2}}.$
Therefore,
\begin{eqnarray*}
   (C_{h_{a}}f-f)^{\prime}(z)&=&n\left[\frac{(h_{a}(z))^{n-1}(1-a_{t}\overline{a}_{t})}{(1-\overline{a}_{t}z)^{2}}-z^{n-1}\right]  \\
   &=&n\left[\frac{(\frac{z-a_{t}}{1-\overline{a}_{t}z})^{n-1}(1-a_{t}\overline{a}_{t})}{(1-\overline{a}_{t}z)^{2}}-z^{n-1}\right]  \\
   &=&n\left[\frac{(z-a_{t})^{n-1}(1-a_{t}\overline{a}_{t})-z^{n-1}((1-\overline{a}_{t}z)^{n+1})}{(1-\overline{a}_{t}z)^{n+1}}\right].
\end{eqnarray*}
Now,
\begin{eqnarray*}
    \lim_{a \to 0}\|C_{h_{a}}f-f\|_{B_{\infty,\circ(\mathbb{D},0)}}&=&\lim_{a\to 0}\left(\sup_{z\in\mathbb{D}}(1-|z|^{2})|(C_{h_{a}}f-f)^{\prime}|(z)\right)  \\
       &=&\lim_{t\to 0}\left(\sup_{z\in\mathbb{D}}(1-|z|^{2})\left|n\left[\frac{(z^{n-1})(1)-z^{n-1}(1)}{(1)^{n+1}}\right]\right|\right)  \\
       &=&\lim_{t\to 0}\left(\sup_{z\in\mathbb{D}}(1-|z|^{2})\left|n[z^{n-1}-z^{n-1}]\right|\right)  \\
       &=&0.
\end{eqnarray*}
Hence, $(S_{t})_{t\in \mathbb{R}}$ is strongly continuous on $B_{\infty,\circ}(\mathbb{U},i)$, as claimed.
\end{proof}
\begin{thm}
The infinitesimal generator $\Gamma$ of $(S_{t})_{t\geq 0}$ on $B_{\infty,\circ}(\mathbb{U},i)$ is given by $\Gamma g(\w)$=$\w g^{\prime}(\w)$ with the domain $\dom(\Gamma)=\{g\in B_{\infty,\circ}(\mathbb{U},i):\w g^{\prime}(\w)\in B_{\infty,\circ}(\mathbb{U},i)\}.$
\end{thm}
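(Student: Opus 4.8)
The plan is to identify the given operator with the infinitesimal generator of the group by verifying the two inclusions between them. Write $A$ for the infinitesimal generator of $(S_t)_{t\in\R}$, defined by $Ag=\lim_{t\to 0}\tfrac1{t}(S_tg-g)$ on the domain $\dom(A)$ of all $g\in B_{\infty,\circ}(\uP,i)$ for which this limit exists in the norm of $B_{\infty,\circ}(\uP,i)$, and write $\G$ for the candidate operator $\G g(\w)=\w g'(\w)$ with $\dom(\G)=\{g\in B_{\infty,\circ}(\uP,i):\w g'(\w)\in B_{\infty,\circ}(\uP,i)\}$. I would first record the pointwise computation
\[
\frac{d}{ds}\,g(e^{s}\w)=e^{s}\w\,g'(e^{s}\w)=(S_s\G g)(\w),
\]
which drives both inclusions. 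It is convenient to note at the outset the standard fact that, for functions vanishing at $i$, the Bloch seminorm controls point evaluations: integrating $\Im(\w)|g'(\w)|\le\|g\|$ along a path from $i$ to any $\w_0$ gives $|g(\w_0)|\le C(\w_0)\|g\|$, so $g\mapsto g(\w_0)$ is a bounded linear functional on $B_{\infty,\circ}(\uP,i)$. Consequently norm convergence in $B_{\infty,\circ}(\uP,i)$ forces pointwise (indeed locally uniform) convergence of the functions involved.

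For the inclusion $A\subseteq\G$, I would take $g\in\dom(A)$. Then $\tfrac1{t}(S_tg-g)\to Ag$ in norm, hence pointwise by the previous remark, so for each $\w$,
\[
(Ag)(\w)=\lim_{t\to0}\frac{g(e^{t}\w)-g(\w)}{t}=\w g'(\w),
\]
the last equality being the elementary chain rule for the holomorphic function $g$. Since $Ag\in B_{\infty,\circ}(\uP,i)$ by definition of the generator, this shows $\w g'(\w)\in B_{\infty,\circ}(\uP,i)$, i.e. $g\in\dom(\G)$ and $Ag=\G g$.

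The substantive direction is $\G\subseteq A$. Here I would fix $g\in\dom(\G)$, so that $h:=\G g=\w g'(\w)$ lies in $B_{\infty,\circ}(\uP,i)$, and integrate the pointwise identity above to get, for each $\w$,
\[
(S_tg-g)(\w)=g(e^{t}\w)-g(\w)=\int_0^{t}(S_s h)(\w)\,ds.
\]
Because $s\mapsto S_s h$ is a continuous $B_{\infty,\circ}(\uP,i)$-valued map (strong continuity, already proved), the Bochner integral $\int_0^t S_s h\,ds$ exists in the space, and since point evaluations are bounded functionals they commute with it; comparing values at every $\w$ upgrades the displayed scalar identity to the vector identity $S_tg-g=\int_0^t S_s h\,ds$ in $B_{\infty,\circ}(\uP,i)$. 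Therefore
\[
\Bigl\|\tfrac1{t}(S_tg-g)-\G g\Bigr\|=\Bigl\|\tfrac1{t}\int_0^{t}\bigl(S_s h-h\bigr)\,ds\Bigr\|\le\frac1{t}\int_0^{t}\|S_s h-h\|\,ds,
\]
and the right-hand side tends to $0$ as $t\to0$ since $\|S_s h-h\|\to0$ by strong continuity. Hence $g\in\dom(A)$ and $Ag=\G g$, which together with the first inclusion yields $A=\G$.

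The only real obstacle is this second inclusion, and it hinges on exactly one point: the condition $g\in\dom(\G)$ is precisely the requirement that $\G g$ belong to $B_{\infty,\circ}(\uP,i)$, which is what licenses applying strong continuity of $(S_s)$ to $h=\G g$ in the averaging estimate. The remaining care is bookkeeping, namely justifying that the scalar Newton--Leibniz identity lifts to a Banach-space-valued identity, for which boundedness of point evaluations on $B_{\infty,\circ}(\uP,i)$ is the key ingredient.
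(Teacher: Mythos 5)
Your proof is correct and follows essentially the same route as the paper: the inclusion $\dom(A)\subseteq\dom(\G)$ via the pointwise chain-rule computation, and the substantive reverse inclusion via the Newton--Leibniz identity $S_tg-g=\int_0^t S_s(\G g)\,ds$ together with strong continuity of $(S_s)$ applied to $h=\G g\in B_{\infty,\circ}(\uP,i)$. Your explicit justification that point evaluations are bounded on $B_{\infty,\circ}(\uP,i)$ (so that norm limits agree with pointwise limits, and the scalar identity lifts to the Bochner-integral identity) is a welcome tightening of steps the paper leaves implicit, but it does not change the underlying argument.
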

\begin{proof}
By definition, the infinitesimal generator denoted by $\Gamma$ of $(S_{t})_{t\geq 0}$ is given by;
\begin{eqnarray*}
  \Gamma g(\w)=\lim_{t\rightarrow 0^{+}}\frac{g(e^{t}\w)-g(\w)}{t}&= &\left.\frac{\partial}{\partial t}g(e^{t}\w)\right|_{t=0}\\
   &=& \w g^{\prime}(\w).
\end{eqnarray*}
It therefore follows that $\dom(\Gamma)\subseteq \{g\in B_{\infty,\circ}(\mathbb{U},i):\w g^{\prime}(\w)\in B_{\infty,\circ}(\mathbb{U},i)\}.$
To prove the reverse inclusion, we let $g\in B_{\infty,\circ}(\mathbb{U},i)$ be such that $\w g^{\prime}(\w)\in B_{\infty,\circ}(\mathbb{U},i).$
Then for $\w \in \mathbb{U}$, we have;
\begin{eqnarray*}
   S_{t}g(\w)-g(\w)&=&\int _{0}^{t}\frac{\partial}{\partial s}g(e^{s}\w)\,ds \\
   &=&\int _{0}^{t}e^s\w g' (e^{s}\w)\,ds\\
   &=& \int _{0}^{t} S_s G(\w)\,ds\,\,\mbox{ where } G(\w)=\w g'(\w).
\end{eqnarray*}
Thus, \[\lim_{t\to 0^+} \frac{S_t g - g}{t} = \lim_{t\to 0^+}\tfrac1{t}\int_0^t S_s G(\w)\,ds\]
and strong continuity of $(S_s)_{t\geq 0}$ implies that $\frac{1}{t}\int _{0}^{t}\|S_{s}G-G\|ds\,\to\, 0$ as $t\rightarrow 0^{+}$.
Hence $ \dom(\Gamma)\supseteq \{g\in B_{\infty,\circ}(\mathbb{U},i):\w g^{\prime}(\w)\in B_{\infty,\circ}(\mathbb{U},i)\}$, which completes the proof.
\end{proof}
Now for the spectral properties, we obtain the spectra of the generator $\Gamma$, determine the resolvents and further obtain the spectra and the norms of the resulting resolvents.
\begin{thm}\label{THM}
Let $\Gamma$ be the infinitesimal generator of $(S_{t})_{t\in \mathbb{R}}$ on $B_{\infty,\circ}(\mathbb{U},i)$. Then $\sigma_{p}(\Gamma)=\emptyset$ and $\s(\G)=i\R$. In particular, $\Gamma$ is an unbounded operator on $B_{\infty,\circ}(\mathbb{U},i)$.
\end{thm}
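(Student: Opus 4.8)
The plan is to handle the three claims separately: read off $\sigma_p(\Gamma)$ from the eigenvalue ODE, confine $\sigma(\Gamma)$ to $i\R$ using that $(S_t)$ is an isometric group, and then force equality by producing approximate eigenvectors at every imaginary point. For the point spectrum, an eigenfunction for $\lambda$ must solve $\w g'(\w)=\lambda g(\w)$ on $\uP$, so $g$ is a constant multiple of $\w^{\lambda}=e^{\lambda\operatorname{Log}\w}$ (principal branch, analytic on $\uP$). Since $\w^{\lambda}$ takes the nonzero value $i^{\lambda}$ at $\w=i$, the vanishing condition $g(i)=0$ built into $B_{\infty,\circ}(\uP,i)$ forces the constant to be $0$; hence $\sigma_p(\Gamma)=\emptyset$.

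To show $\sigma(\Gamma)\subseteq i\R$ I would use that $(S_t)_{t\in\R}$ is a strongly continuous group of isometries, so $\|S_t\|=1$ for every $t$. For $\re\lambda>0$ the Laplace representation $R(\lambda,\Gamma)g=\int_0^\infty e^{-\lambda t}S_t g\,dt$ converges and represents the resolvent with $\|R(\lambda,\Gamma)\|\le(\re\lambda)^{-1}$; running the same argument for the backward group, whose generator is $-\Gamma$ and which is again isometric, places every $\lambda$ with $\re\lambda<0$ in $\rho(\Gamma)$. Thus $\sigma(\Gamma)\subseteq i\R$.

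The substantive step, and the one I expect to be the main obstacle, is the reverse inclusion $i\R\subseteq\sigma(\Gamma)$. The formal eigenfunction $\w^{is}$ belongs to $B_{\infty}(\uP)$ but not to the little Bloch space, since $\im(\w)\,|(\w^{is})'|$ fails to vanish as $\w\to0$ along rays; consequently neither $\Gamma$ nor its adjoint has a genuine eigenvalue and the imaginary axis can only appear as continuous spectrum. I would therefore build approximate eigenvectors. The convenient device is the change of variable $u=\operatorname{Log}\w$, which conjugates $\Gamma$ to $d/du$ on the strip $0<\im u<\pi$ and rewrites the seminorm as $\|g\|_{B_{\infty,1}(\uP)}=\sup_u \sin(\im u)\,|(g\circ\exp)'(u)|$. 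For fixed $s\in\R$ I set $F_n(u)=e^{isu-u^2/n^2}$ and define $g_n$ on $\uP$ by $(g_n\circ\exp)'=F_n$, adjusting an additive constant so that $g_n(i)=0$. Then $\|g_n\|=\sup_u\sin(\im u)\,|F_n(u)|\ge c_s>0$, while the identity $F_n'-isF_n=-\tfrac{2u}{n^2}F_n$ gives $\|(\Gamma-is)g_n\|=\sup_u\sin(\im u)\,\tfrac{2|u|}{n^2}|F_n(u)|=O(1/n)$.

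The delicate verifications are that the Gaussian damping keeps $g_n$ and $\Gamma g_n$ inside $B_{\infty,\circ}(\uP)$ — for which I would check that each superlevel set $\{\sin(\im u)\,|F_n(u)|\ge\e\}$ stays bounded away from $\im\w=0$, so the little Bloch limit holds — and that consequently $g_n\in\dom(\Gamma)$. Granting these, $is\in\sigma_{ap}(\Gamma)\subseteq\sigma(\Gamma)$ for every $s\in\R$, so together with the previous paragraph $\sigma(\Gamma)=i\R$. Finally, since the spectrum of any bounded operator is compact while $i\R$ is unbounded, $\Gamma$ cannot be bounded, which yields the last assertion.
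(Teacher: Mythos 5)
Your first two steps are sound. The point-spectrum argument (eigenfunctions are $c\w^{\la}$, and $g(i)=0$ forces $c=0$) is in fact cleaner than the paper's, which invokes a lemma asserting $c\w^{\nu}\notin B_{\infty,\circ}(\uP)$; and your Laplace-transform/Hille--Yosida argument for $\s(\G)\subseteq i\R$ (applied to the forward and backward isometric semigroups) is a standard alternative to the paper's route via the spectral inclusion $e^{t\s(\G)}\subseteq\s(S_t)\subseteq\partial\D$ for groups of invertible isometries. Both give the same inclusion. However, your approximate-eigenvector construction for the reverse inclusion has a genuine gap: your $g_n$ do not lie in $\dom(\G)$. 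The domain, as identified in the paper, is $\{g\in B_{\infty,\circ}(\uP,i):\w g'(\w)\in B_{\infty,\circ}(\uP,i)\}$, so membership requires not only $g_n(i)=0$ but also $(\G g_n)(i)=i\,g_n'(i)=0$. In your strip coordinates $(\G g_n)\circ\exp=(g_n\circ\exp)'=F_n$, so $(\G g_n)(i)=F_n(i\pi/2)$, whose modulus is $e^{-s\pi/2}e^{\pi^2/(4n^2)}$ --- bounded away from zero, not merely nonzero. The additive constant you adjust fixes $g_n(i)=0$ but leaves no freedom to fix $g_n'(i)$, and subtracting $c_n\phi$ for a fixed $\phi\in\dom(\G)$ with $(\G\phi)(i)\neq 0$ would require $|c_n|\approx e^{-s\pi/2}$, which contributes an order-one amount to $\|(\G-is)(g_n-c_n\phi)\|$ and destroys the $O(1/n)$ defect. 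So as written, $\|(\G-is)g_n\|$ is not even defined, and the "delicate verifications" you flag (the little-Bloch superlevel-set check) are not the ones that fail.

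The gap is repairable within your framework: recenter the Gaussian, taking $F_n(u)=e^{isu-(u-R_n)^2/n^2}$ with $R_n/n\to\infty$ (e.g.\ $R_n=n^2$). Then $|F_n(i\pi/2)|=O(e^{-R_n^2/n^2})$ is exponentially small, the peak at $u=R_n+i\pi/2$ still gives $\|g_n\|\geq e^{-s\pi/2}$, the identity $F_n'-isF_n=-\tfrac{2(u-R_n)}{n^2}F_n$ still yields a defect $O(1/n)$, and now an exponentially small correction by a fixed $\phi\in\dom(\G)$ with $(\G\phi)(i)\neq0$ restores both vanishing conditions without affecting the estimates. It is worth noting that the paper avoids approximate eigenvectors entirely: for $\la\in i\R$ it tests surjectivity on the explicit function $h(\w)=(\w-i)^{-(\la+1)}\in B_{\infty,\circ}(\uP,i)$, solves the first-order ODE $(\la-\G)f=h$ explicitly, and observes that every solution $f(\w)=(\w-i)^{-\la}+c\,\w^{\la}$ fails to lie in the space (their Lemma on $(\w-i)^{\nu}$ requires $\Re(\nu)<0$, while $\Re(-\la)=0$, and $\w^{\la}$ is never little-Bloch), so $h\notin\ran(\la I-\G)$. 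That route is shorter and sidesteps all domain bookkeeping, at the cost of needing the membership lemma for the explicit test functions; your route, once patched, has the advantage of exhibiting $i\R$ as approximate point spectrum rather than merely as spectrum.
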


Before we prove this theorem, we first give the following Lemma:
\begin{lem}\label{lem3}
If $\nu \in \C$ and $c\in \R$, we have
\begin{enumerate}
  \item $g(\w) = c \w^{\nu} \notin B_{\infty,0}(\uP)$ for any $c$
  \item $f(\w)=(w-i)^{\nu} \in B_{\infty,0}(\uP)$ if and only if $\Re(\nu)< 0.$\\
  In particular, $g(\w) \notin B_{\infty,0}(\uP,i)$ for any $c$ and $f(\w) \in B_{\infty,0}(\uP,i)$ if and only if $\Re(\nu)< 0.$
\end{enumerate}
\end{lem}
\begin{proof}
From Proposition \ref{prop1.2}, we know that $g\in B_{\infty,\circ}(\mathbb{U})$ if and only if $g \circ \psi \in B_{\infty,\circ}(\mathbb{D})$. Then for $z\in \D$,
\begin{align*}
(g \circ \psi)(z)&= g(\psi(z))= c(\psi(z))^{\nu}= c(\frac{i(1+z)}{1-z})^{\nu}\\
&= c i(1+z)^{\nu}(1-z)^{-\nu}.
\end{align*}
Now $g\circ \psi \in \Hol(\D)$ if and only if $\Re(\nu) >0$ and $\Re(-\nu)>0$ which is not possible, and therefore $g\circ \psi \notin \Hol(\D)$. Hence $g\notin B_{\infty,0}(\uP)$. This proves (1).\\

For (2), following \cite[Lemma 3.2]{BBMM}, for any $\nu\in \C$, $(w-i)^{\nu} \in \Hol(\uP)$ if and only if $\Re(\nu)< 0$ since $\g = 0$ in this case.\\
The particular cases follow immediately since $B_{\infty,0}(\uP,i) \subseteq B_{\infty,0}(\uP)$ and $g(i)\neq 0$ for (1), while $f(i)=0$ for (2).
\end{proof}
\begin{proof}[Proof of Theorem \ref{THM}]
To obtain the point spectrum of $\Gamma$, let $\lambda$ be an eigenvalue of $\Gamma$ and $g$ be the corresponding eigenvector. Then
 $\Gamma g(\w)=\lambda g(\w)$ is equivalent to $\w g^{\prime}(\w)= \lambda g(\w)$ which yields $\frac{\w g^{\prime}(\w)}{\w} = \frac{ \lambda g(\w)}{\w}$ by dividing both sides by $\w$. By integrating both sides, we obtain $g(\w)=c \w^{\lambda}$, which is not in $ B_{\infty,\circ}(\mathbb{U},i)$ for any $c$. Therefore $\sigma_{p}(\Gamma) = \emptyset$.\\
Since each $S_{t}$ is an invertible isometry, its spectrum satisfies $\sigma (S_{t})\subseteq\partial \mathbb{D}$. Therefore the spectral mapping theorem for strongly continuous groups \cite[Theorem 2.3]{Paz} implies that $e^{t\sigma(\Gamma)}\subseteq \sigma (S_{t})\subseteq \partial \mathbb{D}$. Now let $\lambda\in \sigma (\Gamma)$, then
$|e^{t\lambda}| = 1$ which further implies that $\Re(\lambda)=0$. Thus $\lambda\in i\mathbb{R}$ and therefore $\sigma(\Gamma)\subseteq i\mathbb{R}$.\\
We now need to show the reverse inclusion, that is, $i\R \subseteq \s(\G)$. Fix $\lambda \in i\R$ and assume $\la \notin \s(\G)$ which implies that the resolvent operator $R(\la,\G):\,B_{\infty,\circ}(\mathbb{U},i)\,\to\,B_{\infty,\circ}(\mathbb{U},i)$ is bounded. Consider the function $h(w)=(w-i)^{-(\la +1)}$. Then $\Re(-(\la +1)) = -1 < 0$ and following Lemma \ref{lem3}, it is immediate that $h\in B_{\infty,\circ}(\mathbb{U},i)$. The image function $f=R(\la,\G)h$ is equivalent to $(\la-\G)f=h$ which yields a differential equation
\[f'(\w) - \frac{\la}{\w}f(\w) = -\frac{h(\w)}{\w},\]
whose general solution is
\[f(\w)= (\w-i)^{-\la}+c \w^{\la} \]
which does not belong to $B_{\infty,\circ}(\mathbb{U},i)$ for any $c$, by Lemma \ref{lem3}. Thus $h\notin \ran (\la-\G)$ and so $\s(\G)=i\R.$
\end{proof}
\begin{thm}
Let $\Gamma$ be the infinitesimal generator of $(S_{t})_{t\in \mathbb{R}}$. Then the following hold;
\begin{enumerate}
  \item For $\lambda \in \rho(\Gamma)$, and $h\in B_{\infty,\circ}(\mathbb{U},i)$ then,
  \begin{itemize}
    \item [(i)] $R(\lambda,\Gamma)h(\w)= \w^{\lambda}\int^{\infty}_{\w} \frac{1}{z^{\lambda+1}}h(z)\, dz$, if $\Re(\lambda)>0$.
    \item [(ii)]$R(\lambda,\Gamma)h(\w)=-\w^{\lambda}\int^{\w}_{0} \frac{1}{z^{\lambda+1}}h(z) \,dz,$ if $\Re(\lambda)<0$.
  \end{itemize}
\item  $\sigma(R(\lambda,\Gamma))=\left\{\w: |\w-\frac{1}{2\Re(\lambda)}|=\frac{1}{2\Re(\lambda)}\right\}. $
\item $r(R(\lambda,\Gamma))=\|R(\lambda,\Gamma)\|= \frac{1}{|\Re(\lambda)|}$.
\end{enumerate}
\end{thm}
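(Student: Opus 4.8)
The plan is to derive the resolvent formula in part (1) first, and then read off parts (2) and (3) from it together with the identification $\s(\G)=i\R$ from Theorem \ref{THM}, which makes the resolvent set the two open half-planes $\Re(\la)\neq 0$. For $\Re(\la)>0$ I would use the Laplace-transform (Hille–Yosida) representation of the resolvent of the strongly continuous semigroup $(S_t)_{t\geq 0}$,
\[
R(\la,\G)h(\w)=\int_0^\infty e^{-\la t}S_t h(\w)\,dt=\int_0^\infty e^{-\la t}h(e^{t}\w)\,dt,
\]
the integral converging absolutely since $(S_t)$ is a group of isometries, so $\|S_t\|=1$. Substituting $z=e^{t}\w$ — the ray $\{e^{t}\w:t\geq 0\}$ from $\w$ to $\infty$ stays in $\uP$ because $\uP$ is invariant under positive dilations — converts this into $\w^{\la}\int_{\w}^{\infty}z^{-(\la+1)}h(z)\,dz$, which is (i). For $\Re(\la)<0$ I would use that $-\G$ generates $(S_{-t})_{t\geq 0}$, so $R(\la,\G)=-R(-\la,-\G)=-\int_0^\infty e^{\la t}S_{-t}\,dt$, and the analogous substitution along the ray from $\w$ to $0$ (again inside $\uP$) gives (ii). Differentiating either expression recovers the ODE $(\la-\G)f=h$, i.e.\ $\la f-\w f'=h$, exactly as in the proof of Theorem \ref{THM}; the choice of integration limits is precisely what kills the homogeneous solution $c\w^{\la}$ that Lemma \ref{lem3} forbids in $B_{\infty,\circ}(\uP,i)$.

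For part (2) I would invoke the spectral mapping theorem for the resolvent \cite{Dun}: for $\la\in\rho(\G)$ one has $\s(R(\la,\G))\setminus\{0\}=\{(\la-\mu)^{-1}:\mu\in\s(\G)\}$, and since $\G$ is unbounded the operator $R(\la,\G)$ fails to be surjective, so $0\in\s(R(\la,\G))$. With $\s(\G)=i\R$, the set $\{\la-\mu:\mu\in i\R\}$ is the entire vertical line $\{\Re=\Re(\la)\}$, and its image under the inversion $w\mapsto 1/w$ is a circle through the origin. Writing $\la-\mu=\Re(\la)+it$ and $u+iv=(\la-\mu)^{-1}$, a direct computation yields $\bigl(u-\tfrac{1}{2\Re(\la)}\bigr)^2+v^2=\tfrac{1}{4\Re(\la)^2}$; adjoining the limit point $0$ completes this to the circle centred at $\tfrac1{2\Re(\la)}$ stated in the theorem.

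For part (3) the spectral radius is the largest modulus on that circle, attained at the point diametrically opposite the origin, at distance $1/|\Re(\la)|$, so $r(R(\la,\G))=1/|\Re(\la)|$. The general bound $r(R(\la,\G))\leq\|R(\la,\G)\|$ gives one inequality, and for the reverse I would estimate the norm directly from the integral representation of part (1): since $\|S_t\|=1$,
\[
\|R(\la,\G)\|\leq\int_0^\infty e^{-\Re(\la)t}\,dt=\frac{1}{\Re(\la)}\qquad(\Re(\la)>0),
\]
and symmetrically $\|R(\la,\G)\|\leq 1/|\Re(\la)|$ for $\Re(\la)<0$. The two bounds squeeze to the claimed equality $r(R(\la,\G))=\|R(\la,\G)\|=1/|\Re(\la)|$.

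I expect the principal obstacle to lie in part (1): one must fix a single-valued branch of $z\mapsto z^{\la}$ on $\uP$, check that the contours of integration are the dilation rays through the origin and hence remain in $\uP$, and confirm that the integrals actually produce elements of $B_{\infty,\circ}(\uP,i)$ rather than merely formal ODE solutions. Once the integral representation is justified, parts (2) and (3) reduce to a standard inversion-of-a-line conformal computation and an elementary semigroup norm estimate.
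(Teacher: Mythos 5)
Your proposal follows essentially the same route as the paper: the Laplace-transform representation $R(\la,\G)h=\int_0^\infty e^{-\la t}S_th\,dt$ with the substitutions $z=e^{\pm t}\w$ for part (1) (including the reduction $R(\la,\G)=-R(-\la,-\G)$ when $\Re(\la)<0$), the spectral mapping theorem for resolvents together with the inversion-of-a-vertical-line computation for part (2), and the squeeze $r(R(\la,\G))\leq\|R(\la,\G)\|\leq 1/|\Re(\la)|$ for part (3), where the paper cites Hille--Yosida and you obtain the same bound by estimating the Laplace integral directly. If anything, your explicit adjunction of $0$ to $\s(R(\la,\G))$ via non-surjectivity of the resolvent is slightly more careful than the paper, which passes from the punctured set $\{(\la-ir)^{-1}:r\in\R\}$ to the full circle without comment.
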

\begin{proof}
To prove (1), we take note the resolvent set is given as $\rho(\Gamma)=\{ \lambda \in \mathbb{C}:Re(\lambda)\neq0$\}. We therefore consider the following cases:\\
\textbf{Case 1}: If $Re(\lambda)>0$, then the resolvent operator is given by the Laplace transform: For every $h\in B_{\infty,\circ}(\mathbb{U},i)$, we have $R(\lambda,\Gamma)h=\int^{\infty}_{0}e^{-\lambda t}S_{t}hdt$ with convergence in norm. Therefore, $R(\lambda,\Gamma)h=\int^{\infty}_{0}e^{-\lambda t}h(e^{t}\w)dt.$ By change of variables, let $z=e^{t}\w$, then $\w=e^{-t}z$, $\frac{dz}{dt}=\w e^{t}$ then $dt=\frac{dz}{\w e^{t}}=\frac{dz}{z}$.
Therefore when $t= 0 \Rightarrow z= \w$ and  $t= \infty \Rightarrow z=\infty$, and so;
\begin{eqnarray*}
   R(\lambda,\Gamma)h(\w)&=&\int^{\infty}_{\w}e^{-\lambda t} h(z)\frac{dz}{z} = \int^{\infty}_{\w}\left(\frac{z}{\w}\right)^{-\lambda} \frac{1}{z}h(z)dz\\
   &=&  \w^{\lambda}\int^{\infty}_{\w} \frac{1}{z^{\lambda+1}}h(z)dz.
\end{eqnarray*}
\textbf{Case 2}: If $Re(\lambda)<0$, then $R(\lambda,\Gamma)h = -R(-\lambda,-\Gamma)h= -\int^{\infty}_{0}e^{\lambda t} h(e^{-t} \w)dt.$
Then again by change of variables, let $z=e^{-t}\w$, then $e^{t}=\frac{\w}{z}$, $\frac{dz}{dt}=-\w e^{-t}$ and $dt=\frac{-dz}{\w e^{-t}}=-\frac{dz}{z}$.
Therefore $t=0 \Rightarrow z=w$ and $t=\infty\Rightarrow z=0$ and so;
\begin{eqnarray*}
  R(\lambda,\Gamma)h(w) &=& -\int^{0}_{\w}e^{\lambda t} h(z).-\frac{dz}{z}=-\int^{\w}_{0}\left(\frac{\w}{z}\right)^{\lambda}h(z).\frac{dz}{z}  \\
   &=& -\w^{\lambda}\int^{\w}_{0} \left(\frac{1}{z}\right)^{\lambda}.\frac{1}{z}h(z)dz  \\
   &=& -\w^{\lambda}\int^{\w}_{0} \frac{1}{z^{\lambda+1}}h(z)dz.
\end{eqnarray*}
To prove (2), we use the spectral mapping theorem for the resolvents which asserts that $\sigma(R(\lambda,\Gamma))=\left\{\frac{1}{\lambda-\mu}:\mu\in\sigma(\Gamma)\right\}\setminus \{0\}$ for $\lambda\in\rho(\Gamma)$. Therefore,
\begin{eqnarray*}
   \sigma(R(\lambda,\Gamma))  &=&\left\{\frac{1}{\lambda-ir}:r\in\mathbb{R}\right\}\setminus \{0\}\\ &=&\left\{\frac{1}{\Re(\lambda)+i(Im(\lambda)-r)}:r\in\mathbb{R} \right\}\setminus \{0\}.
  \end{eqnarray*}
Rationalizing the denominator and simplifying we get \\ $\sigma(R(\lambda,\Gamma))=\left\{\frac{(\Re(\lambda)-i(\Im(\lambda)-r))}{(\Re(\lambda))^{2}+(\Im(\lambda)-r)^{2}}:r\in\mathbb{R}\right\}.$\\
Now by letting $w = \frac{(\Re(\lambda)-i(\Im(\lambda)-r))}{(\Re(\lambda))^{2}+(\Im(\lambda)-r)^{2}}$, subtracting $\frac{1}{2\Re(\lambda)}$ and finding the magnitude of both sides we get,
\begin{eqnarray*}
\left|w-\frac{1}{2\Re(\lambda)}\right|^{2} &=& \frac{1}{(2\Re(\lambda))^{2}},
\end{eqnarray*}
and so
\begin{eqnarray*}
\left|w-\frac{1}{2\Re(\lambda)}\right|&=&  \frac{1}{2\Re(\lambda)}.
\end{eqnarray*}
Therefore, $\sigma(R(\lambda,\Gamma))=\left\{w:|w-\frac{1}{2\Re(\lambda)}|=\frac{1}{2\Re(\lambda)}\right\}.$
For part (3), the spectral radius $r(R(\lambda,\Gamma))$ is given by;
\begin{eqnarray*}
   r(R(\lambda,\Gamma))&=& \sup\{|w|:w \in\sigma(R(\lambda,\Gamma))\} \\
   &=& \sup\left\{ |w|:\left|w-\frac{1}{2Re(\lambda)}\right|=\frac{1}{2Re(\lambda)}\right\}= \frac{1}{|Re(\lambda)|}.
\end{eqnarray*}
Finally to determine $\|R(\lambda,\Gamma)\|$, we use the Hille Yosida theorem as well as the fact that the spectral radius is always bounded by the norm.
Therefore,
\begin{eqnarray*}
 \frac{1}{|Re(\lambda)|}=r(R(\lambda,\Gamma))&\leq&\|R(\lambda,\Gamma)\|\leq \frac{1}{|Re(\lambda)|}.
\end{eqnarray*}
Thus, $r(R(\lambda,\Gamma))=\|R(\lambda,\Gamma)\|= \frac{1}{|\Re(\lambda)|},$ as desired.
\end{proof}

\subsection{Translation Group}
In this group the automorphisms are of the form $\vj_t(z)=z+kt$, where $z\in \uP$ and $k, t \in \R$ with $k\neq 0.$ As noted earlier in subsection 3.1, without loss of generality we let $k=1$ and consider the self analytic maps $\vj_t:\,\uP\,\to\,\uP$ given by $\vj_t(z)=z+t$ for $z\in \uP.$
Then the corresponding group of weighted composition operators defined on $L^{1}_{a}(\mathbb{U},\mu_{\alpha})$ is therefore given by $T_{t}f(z)= f(z+t)$, for all $f\in L^{p}_{a}(\mathbb{U},\mu_{\alpha})$. \\
Now using the duality relation given by equation \eqref{eq3.1} and its sesquilinear pairing given by equation \eqref{eq3.2}, we have:

Let $g\in B_{\infty,\circ}(\mathbb{U},i)$ and $f\in L_a^1(\uP,\mu_\al)$, then 
\begin{align*}
\langle g,T_{t}f \rangle &=\int_{\mathbb{U}}g(z)\overline{f(z+t)}d\mu_{\alpha}(z)\\
&=\int_{\mathbb{U}}g(z)\overline{f(z+t)}(\Im(z))^{\alpha}dA(z).
\end{align*}
Now by a change of variables, let $\w = z+t $, then $ z = \w-t$ and $dA(\w) = dA(z)$.
Therefore,
\begin{eqnarray*}
  \langle g,T_{t}f \rangle &=& \int_{\mathbb{U}}g(\w-t)\overline{f(\w)}(\Im(\w))^{\alpha}dA(\w) \\
   &=& \int_{\mathbb{U}}g(\w-t)\overline{f(\w)}d\mu _{\alpha}(\w)\\
   &=& \langle T^*_{t}g, f\rangle.
\end{eqnarray*}
Now, define $S_t:=T_t^*$ on $B_{\infty,0}(\uP,i)$. Then for $g\in B_{\infty,0}(\uP,i)$, $S_{t}g(\w)= g(\w-t)$ is the group of composition operators defined on $B_{\infty,\circ}(\mathbb{U},i)$. We determine the semigroup properties of this group in the remaining part of this paper.
\begin{thm}
Let $S_{t}g(\w) := g(\w-t)$ be a semigroup of composition operators defined on $B_{\infty,\circ}(\mathbb{U},i)$. Then, $ (S_{t})_{t\in \mathbb{R}}$ is a strongly continuous group of isometries on $B_{\infty,\circ}(\mathbb{U},i).$
\end{thm}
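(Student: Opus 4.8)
The plan is to establish the three asserted properties in turn---the group law, the isometry property, and strong continuity---following the template of the scaling-group theorem, while exploiting the feature special to this setting that real translations preserve the imaginary part.

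First I would check the group property directly: since
\[
S_t S_s g(\w) = g(\w - s - t) = S_{t+s} g(\w) \quad\mbox{and}\quad S_0 = I,
\]
the family $(S_t)_{t\in\R}$ is a one-parameter group, each $S_t$ being invertible with inverse $S_{-t}$. For the isometry property I would use $(S_t g)'(\w) = g'(\w - t)$ to write
\[
\|S_t g\|_{B_{\infty,1}(\uP)} = \sup_{\w\in\uP} \Im(\w)\,|g'(\w - t)|.
\]
The simplification absent in the scaling case is that $\Im(\w - t) = \Im(\w)$ for real $t$, so the substitution $z = \w - t$ immediately gives $\sup_{z}\Im(z)|g'(z)| = \|g\|_{B_{\infty,1}(\uP)}$, with no weight factors to cancel. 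Hence each $S_t$ is a surjective isometry.

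The substance of the proof, and the main obstacle, is strong continuity, i.e. $\|S_t g - g\|\to 0$ as $t\to 0$. Because the $S_t$ are uniformly bounded isometries, it suffices to prove this on a dense subset and then pass to the limit through $\|S_t g - g\| \le 2\|g - g_n\| + \|S_t g_n - g_n\|$. Following the reduction already used for the scaling group, I would transfer to the disk via Proposition \ref{prop1.2}: $C_\psi$ is an isometry (up to the constant $\tfrac12$) of $B_{\infty,\circ}(\uP,i)$ onto $B_{\infty,\circ}(\D,0)$, and $S_t$ is intertwined with composition by the M\"obius family $\sigma_t = \psi^{-1}\circ\vj_{-t}\circ\psi$, namely
\[
\sigma_t(z) = \frac{(2i+t)z - t}{tz + (2i - t)},
\]
which fixes the boundary point $1 = \psi^{-1}(\infty)$ and satisfies $\sigma_t\to\mathrm{id}$ as $t\to 0$. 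For a polynomial $p$ one has $(C_{\sigma_t}p - p)'(z) = p'(\sigma_t(z))\sigma_t'(z) - p'(z)$; since the pole of $\sigma_t$ sits at $1 - 2i/t$, which recedes to $\infty$ as $t\to 0$, both $\sigma_t$ and $\sigma_t'$ converge uniformly on the compact set $\overline{\D}$ to the identity and to $1$. Thus $p'(\sigma_t(z))\sigma_t'(z) - p'(z)\to 0$ uniformly on $\overline{\D}$, and multiplying by $(1-|z|^2)\le 1$ yields $\|C_{\sigma_t}p - p\|_{B_{\infty,1}(\D)}\to 0$. Invoking the density of polynomials in $B_{\infty,\circ}(\D,0)$ then finishes the argument.

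I expect the delicate point to be uniformity near the boundary $\partial\D$ (equivalently near $\Im\w\to 0$): the Bloch seminorm is a supremum over the whole domain, so merely locally uniform convergence $\sigma_t\to\mathrm{id}$ would not suffice. For polynomials this is circumvented precisely because $p'(\sigma_t(\cdot))\sigma_t'(\cdot) - p'(\cdot)$ extends continuously to the compact $\overline{\D}$, where the convergence is genuinely uniform, while the weight $(1-|z|^2)$ only helps. Should one prefer to avoid the Cayley reduction, the same continuity can be proved directly on $\uP$ by splitting the supremum at a height $\Im\w = \delta$: on $\{\Im\w < \delta\}$ both terms are small by the little-Bloch condition, again using $\Im(\w - t) = \Im(\w)$, while on $\{\Im\w \ge \delta\}$ one writes $g'(\w - t) - g'(\w) = -\int_0^t g''(\w - s)\,ds$ and controls $\Im(\w)|g''(\w - s)|$ by the standard second-order Bloch estimate $\Im(\zeta)^2|g''(\zeta)| \le C\|g\|$. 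Either route reduces strong continuity to the easy part once boundary uniformity is secured.
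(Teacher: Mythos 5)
Your proposal is correct and follows essentially the same route as the paper: the group law is immediate, the isometry follows from $\Im(\w-t)=\Im(\w)$, and strong continuity is obtained by conjugating with the Cayley transform to $B_{\infty,\circ}(\D,0)$ (your $\sigma_t(z)=\frac{(2i+t)z-t}{tz+2i-t}$ is exactly the paper's $h_{a}(z)=\frac{z-a_t}{b_t+a_tz}$ with $a_t=\frac{t}{2i+t}$, $b_t=\frac{2i-t}{2i+t}$) and testing on polynomials, dense in $B_{\infty,\circ}(\D,0)$. In fact you make explicit two details the paper glosses over --- the uniform boundedness of the isometries justifying the reduction to a dense set, and the uniform convergence of $p'\circ\sigma_t\cdot\sigma_t'$ on $\overline{\D}$ (the pole receding to $\infty$) that licenses interchanging the limit with the supremum --- so your write-up is, if anything, the more complete one.
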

\begin{proof}
It is clear from the definition that $ (S_{t})_{t\in \mathbb{R}}$ is a group. To prove that $ (S_{t})_{t\in \mathbb{R}}$ is an isometry, then by the definition of isometry, we have;
\begin{eqnarray*}
  \|S_{t}g\|_{B_{\infty,\circ}(\mathbb{U},i)} &=& \sup_{\w\in\mathbb{U}}\Im(\w)|(S_{t}g)^{\prime}(\w)| \\
   &=&\sup_{\w\in\mathbb{U}}\Im(\w)|g^{\prime}(\w-t)|.
\end{eqnarray*}
By change of variables, let $z=\w-t$ then $\w=z+t$ and $\Im(\w)=\Im(z)$.
Hence,
\begin{eqnarray*}
  \|S_{t}g\|_{B_{\infty,\circ}(\mathbb{U},i)}&=&\sup_{z\in\mathbb{U}}\Im(z)|g^{\prime}(z)|\\
   &=& \|g\|_{B_{\infty,\circ}(\mathbb{U},i)},\mbox{ as desired}.
\end{eqnarray*}
For strongly continuity propert, we argue as we did in the previous section. We note that $S_{t}=C_{\varphi_{-t}}$ which is strongly continuous on $B_{\infty,\circ}(\mathbb{U},i)$ if and only if $(C_{\psi^{-1}\circ\varphi_{-t}\circ\psi})_{t\in\mathbb{R}}$ is strongly continuous on $B_{\infty,\circ}(\mathbb{D},0)$, which consists of functions vanishing at point $0$. \\
We compute $\psi^{-1}\circ\varphi_{-t}\circ\psi(z)$. Let $a_{t}=\frac{t}{2i+t}$ and $b_{t}=\frac{2i-t}{2i+t}$, then a straight forward calculation yields
   \begin{eqnarray*}
  \psi^{-1}\circ\varphi_{-t}\circ\psi(z) &=&\frac{z-a_{t}}{b_{t}+a_{t}z}  \\
   &=&h_{a}(z),
\end{eqnarray*}
where we have let $h_{a}(z) = \frac{z-a_{t}}{b_{t}+a_{t}z}$. Clearly, $t\,\to\,0$ as $a_{t}\,\to\,0$ and $b_{t}\,\to\,1$.
It therefore suffices to show that $\|C_{h_{a}}f-f\|_{B_{\infty,\circ}(\mathbb{D},0)}\rightarrow0$ as $t\rightarrow0$.
Using density of polynomials in ${B_{\infty,\circ}(\mathbb{D},0)}$, we let $f(z)=z^{n}$. Then $C_{h_{a}}z^{n}-z^{n}=(h_{a}(z))^{n}-z^{n}, n\geq1.$
Therefore $(C_{h_{a}}f-f)^{\prime}(z)=n[(h_{a}(z))^{n-1}h^{\prime}_{a}(z)-z^{n-1}]$.
But $h_{a}(z)=\frac{z-a_{t}}{b_{t}+a_{t}z}\Rightarrow h^{\prime}_{a}(z)=\frac{(b_{t}+a_{t}z)(1)-(z-a_{t})(a_{t})}{(b_{t}+a_{t}z)^{2}}.$
Therefore by substituting,
\begin{eqnarray*}
   (C_{h_{a}}f-f)^{\prime}(z)&=&n[(h_{a}(z))^{n-1}h^{\prime}_{a}(z)-z^{n-1}]  \\
   &=&n\left[\left(\frac{z-a_{t}}{b_{t}+a_{t}z}\right)^{n-1}\frac{(b_{t}+a_{t}z)-(z-a_{t})(a_{t})}{(b_{t}+a_{t}z)^{2}}-z^{n-1}\right]  \\
   &=&n\left[\frac{(z-a_{t})^{n-1}(b_{t}+a_{t}z)-(z-a_{t})(a_{t})}{(b_{t}+a_{t}z)^{n+1}}-z^{n-1}\right].
\end{eqnarray*}
Now, 
\begin{eqnarray*}
  \lim_{t\rightarrow 0^{+}}\|C_{h_{a}}f-f\|_{B_{\infty,\circ(\mathbb{D},0)}} &=&\lim_{t\rightarrow 0^{+}}\left(\sup_{z\in\mathbb{D}}(1-|z|^{2})|(C_{h_{a}}f-f)^{\prime}|(z)\right)  \\
       &=&\lim_{t\rightarrow 0^{+}}\left(\sup_{z\in\mathbb{D}}(1-|z|^{2})\right.\\
       &&\left.\left|n\left[\frac{(z-a_{t})^{n-1}(b_{t}+a_{t}z)-(z-a_{t})(a_{t})}{(b_{t}+a_{t}z)^{n+1}}-z^{n-1}\right]\right|\right)\\
       &=&\lim_{t\rightarrow 0^{+}}\left(\sup_{z\in\mathbb{D}}(1-|z|^{2})\left|\frac{n[z^{n-1}-0-z^{n-1}]}{1}\right|\right)\\
       &=&0.
    \end{eqnarray*}
Hence $(S_{t})_{\in \mathbb{R}}$ is strongly continuous, as claimed.
\end{proof}
\begin{thm}
The infinitesimal generator $\Gamma$ of $(S_{t})_{t\geq 0}$ on $B_{\infty,\circ}(\mathbb{U},i)$ is given by $\Gamma g(\w)= -g^{\prime}(\w)$ with the domain $\dom(\Gamma)=\{g\in B_{\infty,\circ}(\mathbb{U},i):g^{\prime}\in B_{\infty,\circ}(\mathbb{U},i)\}.$
\end{thm}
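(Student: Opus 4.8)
The plan is to mirror the argument used for the scaling group, since the two generators are obtained in exactly the same way, only with $\w g'(\w)$ replaced by $-g'(\w)$.

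First I would compute $\G$ directly from its definition. For $g\in\dom(\G)$, differentiating the orbit gives
\[
\G g(\w)=\lim_{t\to 0^+}\frac{S_t g(\w)-g(\w)}{t}=\lim_{t\to 0^+}\frac{g(\w-t)-g(\w)}{t}=\left.\frac{\partial}{\partial t}g(\w-t)\right|_{t=0}=-g'(\w),
\]
by the chain rule. Since for $g\in\dom(\G)$ the image $\G g=-g'$ must itself lie in $B_{\infty,\circ}(\uP,i)$, this immediately yields the inclusion $\dom(\G)\subseteq\{g\in B_{\infty,\circ}(\uP,i):g'\in B_{\infty,\circ}(\uP,i)\}$.

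For the reverse inclusion I would fix $g\in B_{\infty,\circ}(\uP,i)$ with $g'\in B_{\infty,\circ}(\uP,i)$ and set $G(\w)=-g'(\w)$. Using the fundamental theorem of calculus along the orbit together with $S_sG(\w)=G(\w-s)=-g'(\w-s)$,
\[
S_t g(\w)-g(\w)=g(\w-t)-g(\w)=\int_0^t\frac{\partial}{\partial s}g(\w-s)\,ds=\int_0^t\bigl(-g'(\w-s)\bigr)\,ds=\int_0^t S_s G(\w)\,ds,
\]
so that
\[
\frac{S_t g-g}{t}-G=\frac1{t}\int_0^t\bigl(S_s G-G\bigr)\,ds.
\]
I would then invoke the strong continuity of $(S_t)_{t\in\R}$ established in the previous theorem: since $s\mapsto S_s G$ is norm-continuous at $s=0$, the average satisfies $\tfrac1{t}\int_0^t\|S_s G-G\|\,ds\to 0$ as $t\to 0^+$. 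Hence the difference quotient converges in the norm of $B_{\infty,\circ}(\uP,i)$ to $G=-g'$, which shows $g\in\dom(\G)$ and completes the reverse inclusion.

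The main obstacle is ensuring that every convergence takes place in the Bloch norm $\|\cdot\|_{B_{\infty,1}(\uP)}$ rather than merely pointwise: passing from the pointwise identity $\G g=-g'$ to a genuine statement about the domain requires that $-g'$ actually belong to $B_{\infty,\circ}(\uP,i)$ (in particular that it vanish at $i$ and obey the little-Bloch decay), and that the averaged difference be controlled in operator norm. Both points are supplied by the strong continuity and the isometry property of $(S_t)$ proved in the preceding theorem, so no new machinery is needed.
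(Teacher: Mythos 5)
Your proposal is correct and follows essentially the same route as the paper: pointwise differentiation of the orbit for the formula and the forward inclusion, then the fundamental theorem of calculus along the orbit combined with strong continuity of $(S_t)$ to get the reverse inclusion. In fact your write-up is slightly cleaner than the paper's, which contains a sign slip (writing $\frac{\partial}{\partial s}S_sg(\w)=-g'(\w-s)=S_sg'(\w)$ instead of $-S_sg'(\w)$) and stray notation ($f'$, $T_s$) that you avoid by working with $G=-g'$ throughout.
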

\begin{proof}
By definition, the infinitesimal generator $\Gamma$ on $B_{\infty,\circ}(\uP,i)$ is given by;
\begin{eqnarray*}
  \Gamma g(\w)&=&\lim_{t\to 0^{+}}\frac{g(\w-t)-g(\w)}{t}= \left.\frac{\partial}{\partial t}g(\w-t)\right|_{t=0} \\
   &=& -g^{\prime}(\w).
\end{eqnarray*}
Therefore $\dom(\Gamma)\subset \{g\in B_{\infty,\circ}(\mathbb{U},i):g^{\prime}\in B_{\infty,\circ}(\mathbb{U},i)\}.$ Conversely, let $g\in B_{\infty,\circ}(\mathbb{U},i)$ be such that $g^{\prime}\in B_{\infty,\circ}(\mathbb{U},i).$
Thus we have;
\begin{eqnarray*}
  \frac{ S_{t}g-g}{t}&=& \tfrac1{t}\int _{0}^{t}\frac{\partial}{\partial s}S_sg\,ds
\end{eqnarray*}
and for every $\w\in \uP$, $\frac{\partial}{\partial s}S_sg(\w) = -g'(\w-s) = S_sg'(\w)$. Thus,
\begin{align*}
\left\|\frac{S_sg-g}{t} - f' \right\|\leq \tfrac1{t}\int_0^t \left\|T_sf' - f'\right\|\,ds \,\to\,0 \mbox{ as } t\to 0
\end{align*}
by strong continuity, and therefore $ \dom(\Gamma)\supseteq \{g\in B_{\infty,\circ}(\mathbb{U},i):g^{\prime}\in B_{\infty,\circ}(\mathbb{U},i)\}$, which completes the proof.
\end{proof}

\subsection*{Acknowledgment}
This work was completed during the period when the second author was visiting the Aristotle University of Thessaloniki, Greece. He would like to sincerely that the Simon's Foundation for funding his visit. He would also wish to thank his host Prof. Aristomenis G. Siskakis and the department of Mathematics for the unmatched hospitality

\end{document}